\newtheorem{theorem}{Theorem}[section]
\newtheorem{lemma}[theorem]{Lemma}
\theoremstyle{definition}
\newtheorem{remark}[theorem]{Remark}
\newtheorem{assumption}[theorem]{Assumption}
\newcommand{\norm}[1]{\left\Vert#1\right\Vert}
\numberwithin{equation}{section}
\begin{document}
\font\nho=cmr10
\def\dive{\mathrm{div}}
\def\cal{\mathcal}
\def\L{\cal L}

\def \ud{\underline }
\def\id{{\indent }}
\def\f{\frac}
\def\non{{\noindent}}
 \def\le{\leqslant} 
 \def\leq{\leqslant}
 \def\geq{\geqslant} 
\def\rar{\rightarrow}
\def\Rar{\Rightarrow}
\def\ti{\times}
\def\i{\mathbb I}
\def\j{\mathbb J}
\def\si{\sigma}
\def\Ga{\Gamma}
\def\ga{\gamma}
\def\ld{{\lambda}}
\def\Si{\Psi}
\def\f{\mathbf F}
\def\r{\hro{R}}
\def\e{\cal{E}}
\def\B{\cal B}
\def\A{\mathcal{A}}
\def\p{\mathbb P}

\def\tet{\theta}
\def\Tet{\Theta}
\def\hro{\mathbb}
\def\ho{\mathcal}
\def\P{\ho P}
\def\E{\mathcal{E}}
\def\n{\mathbb{N}}
\def\M{\mathbb{M}}
\def\dMu{\mathbf{U}}
\def\dMcs{\mathbf{C}}
\def\dMcu{\mathbf{C^u}}
\def\vk{\vskip 0.2cm}
\def\td{\Leftrightarrow}
\def\df{\frac}
\def\Wei{\mathrm{We}}
\def\Rey{\mathrm{Re}}
\def\s{\mathbb S}
\def\l{\mathcal{L}}
\def\C+{C_+([t_0,\infty))}
\def\o{\cal O}

\begin{center}
{\LARGE\bf Well-posedness and exponential stability for Boussinesq systems on real hyperbolic Manifolds and application}

{\bf Pham Truong Xuan*}\footnote{*Corresponding author, Thang Long Institute of Mathematics and Applied Sciences (TIMAS), Thang Long University,
Nghiem Xuan Yem, Hoang Mai, Hanoi, Vietnam. Email: phamtruongxuan.k5@gmail.com or xuanpt@thanglong.edu.vn}
and
{\bf Tran Thi Ngoc}\footnote{Faculty of Fundamental Sciences, East Asia University of Technology,
Trinh Van Bo Street, Nam Tu Liem, Hanoi, Vietnam and Thang Long Institute of Mathematics and Applied Sciences (TIMAS), Thang Long University,
Nghiem Xuan Yem, Hoang Mai, Hanoi, Vietnam.
Email: ngoctt@eaut.edu.vn}

\end{center}

{\bf Abstract.} 
We investigate the global existence and exponential decay of mild solutions for the Boussinesq systems in $L^p$-phase spaces on the framework of real hyperbolic manifold $\mathbb{H}^d(\mathbb{R})$, where $d \geqslant 2$ and $1<p\leq d$. We consider a couple of Ebin-Marsden's Laplace and Laplace-Beltrami operators associated with the corresponding linear system which provides a vectorial matrix semigoup. First, we show the existence and the uniqueness of the bounded mild solution for the linear system by using dispersive and smoothing estimates of the vectorial matrix semigroup. Next, using the fixed point arguments, we can pass from the linear system to the semilinear system to establish the existence of the bounded mild solutions. By using Gronwall's inequality, we establish the exponential stability of such solutions. Finally, we give an application of stability to the existence of periodic mild solutions for the Boussinesq systems. 

{\bf 2020 Mathematics Subject Classification.} {Primary 35Q30, 35B35; Secondary 58J35, 32Q45}

{\bf Keywords.} {Boussinesq system, real hyperbolic manifold, bounded mild solution, periodic solution, Serrin principle, Exponential stability}


\tableofcontents

\section{Introduction}
In the present paper, we are concerned with the incompressible Boussinesq system in the hyperbolic space $(\mathbb{H}^d(\mathbb{R}),g)$, where the dimension $d\geqslant 2$ and $g$ is the hyperbolic metric
\begin{equation}
\left\{
\begin{array}
[c]{rll}%
u_{t}+(u\cdot\nabla)u - Lu+\nabla p \!\!\!\! & = \kappa\theta h + \dive F\quad &
x\in\mathbb{H}^{d}(\r),\,t>0,\hfill\\
\operatorname{div}u\!\! & =\;0\quad & x\in\mathbb{H}^{d}(\r),\,t\geq0,\\
\theta_{t}-\widetilde{L}\theta+(u\cdot\nabla)\theta\!\! & =\;\dive f\quad & x\in
\mathbb{H}^{d}(\r),\,t>0,\\
u(x,0)\!\! & =\;u_{0}(x)\quad & x\in\mathbb{H}^{d}(\r),\\
\theta(x,0)\!\! & =\;\theta_{0}(x)\quad & x\in\mathbb{H}^{d}(\r),
\end{array}
\right.  \label{BouE}%
\end{equation}
where where $L= -(d-1)+ \overrightarrow{\Delta}$ is Ebin-Marsden's Laplace operator, $\widetilde{L}= \Delta_g$ is Laplace-Beltrami operator associated with metric $g$, the constant $\kappa>0$ is the volume expansion coefficient. The field $h$ is a generalized function of gravitational field satisfying Assumption \ref{Assum} below, and the constant $\kappa>0$ is the volume expansion coefficient. The unknowns $u$ is the velocity field, $p$ is the scalar pressure, and $\theta$ is the temperature. The vector field $f$ is given such that $\dive f$ represents the reference temperature and the second order tensor $F$ is given such that $\dive F$ represents the external force. Considering the zero-temperature case, i.e., $\theta=0$, then system (\ref{BouE}) becomes the Navier-Stokes equations.

We now reall briefly some results on the Boussinesq system in Euclidean space $\mathbb{R}^d$. Fife and Joseph \cite{Fi1969} provided one of the first rigorous mathematical results for the
convection problem by constructing analytic stationary solutions for the Boussinesq system with the bounded field $h$, as well as analyzing some stability and
bifurcation properties. After, Cannon and DiBenedetto \cite{Ca1980}
established the local-in-time existence in the class $L^{p}(0,T;L^{q}%
(\mathbb{R}^{n}))$ with suitable $p,q$. Hishida \cite{Hi1995} (see also \cite{Mo1991}) obtained the existence and exponential stability of global-in-time strong solutions for the Boussinesq system near to the steady state in a bounded domain of $\mathbb{R}^{3}$. Later, by using the $L^{p,\infty}$-$L^{q,\infty}$-dispersive and smoothing estimates in weak-$L^{p}$ spaces of the semigroup $e^{-tL}$ associated with the corresponding linear equations of the Boussinesq system,
Hishida \cite{Hi1997} showed the existence and large-time behavior of
global-in-time strong solutions in an exterior domain of $\mathbb{R}^{3}$
under smallness assumptions on the initial data $(u_{0},\theta_{0})$.
Well-posedness of time-periodic and almost periodic small solutions in exterior domains were proved
in \cite{HuyXuan22, Na2020} by employing frameworks based on weak-$L^{p}$ spaces. The
existence and stability of global small mild solutions for the Boussinesq system were studied in weak-$L^{p}$ spaces in \cite{Fe2006,Fe2010} and in Morrey spaces in \cite{Al2011}. A  result of stability in $B_{2,1}^{3/2}\times \dot{B}_{2,1}^{-1/2},$ under small perturbations, for a class of global large $H^{1}$- solutions was proved by \cite{Liu2014}. Brandolese and Schonbek \cite{Br2012} obtained results on the existence and time-decay of weak solutions for the Boussinesq system in whole space $\mathbb{R}^{3}$ with initial data $(u_{0},\theta_{0})\in L^{2}\times L^{2}$. Li and Wang \cite{Li-Wang2021}
analyzed the Boussinesq system in the torus $\mathbb{T}^{3}$ and obtained an
ill-posedness result in $\dot{B}_{\infty,\infty}^{-1}\times\dot{B}%
_{\infty,\infty}^{-1}$ by showing the so-called norm inflation phenomena. Komo
\cite{Komo2015} analyzed the Boussinesq system in general smooth domains
$\Omega\subset$ $\mathbb{R}^{3}$ and obtained uniqueness criteria for strong
solutions in the framework of Lebesgue time-spatial mixed spaces
$L^{p}(0,T;L^{q}(\Omega))$ by assuming $(u_{0},\theta_{0})\in L^{2}\times
L^{2}$ and $g\in L^{8/3}(0,T;L^{4}(\Omega))$. Considering the case of a
constant field $h$, Brandolese and He \cite{Br2020} showed the uniqueness of
mild solutions in the class $(u,\theta)\in C([0,T],L^{3}(\mathbb{R}^{3})\times
L^{1}(\mathbb{R}^{3}))$ with $\theta\in L_{loc}^{\infty}((0,T);L^{q,\infty
}(\mathbb{R}^{3}))$. The existence and uniqueness results in the partial inviscid cases of the Boussinesq system were studied in \cite{Danchin2009,Danchin2008}, where the authors
explored different kinds of conditions on the initial data $(u_{0},\theta
_{0})$ involving $L^{p},$ $L^{p,\infty}$ (weak-$L^{p}$) and Besov spaces.
Recently, the unconditional uniqueness of mild solutions for Boussinesq systems in Morrey-Lorentz spaces has established by Ferreira and Xuan \cite{FePha2023}. Additionally, The well-posedness and stability of periodic mild solutions for Boussinesq systems in weak-Morrey spaces has studied by Xuan et al. \cite{XuanVanThuy2024}.  

We present in the following some related works which concerne the Navier-Stokes equations and generalized evolution equations on non-compact manifolds with negative Ricci curvatures. On these manifolds, Ebin-Marsden \cite{EbiMa} introduced the notion of vectorial laplace operator by the mean of deformation tensor formula (today, it is known as Ebin and Marsden's laplace operator), then they reformulated the Navier-Stokes equations on Einstein manifolds that have negative Ricci curvatures. Since then, this notion has been used in the works of Czubak and Chan \cite{Cz1,Cz2} and also Lichtenfelz \cite{Li2016} to prove the non-uniqueness of weak Leray solution of Navier-Stokes equation on the three-dimensional  hyperbolic manifolds. Furthermore, Pierfelice \cite{Pi} has proved the dispersive and smoothing estimates for Stokes semigroups on the generalized non-compact manifolds with negative Ricci curvature then combines these estimates with Kato-iteration method to prove the existence and uniqueness of strong mild solutions to Navier-Stokes equations. The existence and stability of periodic and asymptotically almost periodic mild solutions to the Navier-Stokes equations and generalized parabolic evolution equations on noncompact manifolds with negative curvature tensors have been established in some recent works \cite{HuyXuan2021,HuyXuan2022,HuyVan2023,XVQ2023,XuanVan2023}. In the related works, the Navier-Stokes equations associated with Hodge-Laplace operator has been studied in several  manifolds, e.g., on two sphere \cite{Cao1999,Il1991}, on compact Riemannian manifolds \cite{Fa2018,Fa2020,Ko2008,MiTa2001,Sa}, or on the connected sums of $\mathbb{R}^3$ in \cite{Zha}. 

In this paper, we consider the wellposedness and exponential stability of mild solutions for Boussinesq system \eqref{BouE} with initial data $(u(0),\theta(0))$ in $L^p({\bf M};\,\Gamma(T{\bf M}))\times L^p({\bf M};\,\mathbb{R})$ for the case $1<p\leq d$. We will also revisit the existence of periodic mild solutions by using the stability result. This method is known as Serrin principle on non-compact Riemannian manifolds (for detailed method see \cite{HuyHa2022, HuyVan2022} for the case of Navier-Stokes equations and see \cite{Se1959} for original method). Our work extend some recent ones of Navier-Stokes equations on non-compact Einstein manifolds with negative Ricci curvatures \cite{HuyHa2022,HuyVan2022,HuyXuan2021,Pi}.

In particular, we first represent system \eqref{BouE} under the matrix intergral equation (see equation \eqref{BouMatrixEq} below). Then, we use the estimates for the semigroups generating by Ebin-Marsden's Laplace and Laplace-Beltrami operators (obtained in \cite{Pi}) to prove the $L^p-L^q$-dispersive and smoothing estimates for the matrix semigroup asscociated with the Boussinesq system (see Lemma \ref{dispersive}). Using these estimates we prove the existence of bounded mild solution for the linear equation corresponding Boussinesq system (see Theorem \ref{thm1}). After that, we establish the estimates for the bilinear operator associated with Boussinesq system, i.e., bilinear estimates \eqref{Bilinear1}, \eqref{Bilinear2}. Combining these estimates with the existence for the linear equation and fixed point arguments we establish the existence of bounded mild solution for the Boussinesq system in Theorem \ref{PeriodicThm}. We use cone inequality to prove the exponential stability of the Boussinesq system (see Theorem \ref{stability}). Finally, we give an application of exponential stability to the existence of periodic solution (see Theorem \ref{pest}).

Note that, our results extend the ones obtained in \cite{Pi} in the aspect of equations and of the range of phase spaces's dimensions and in \cite{HuyHa2022} in the aspect of periodic solutions.

This paper is organized as follows: in Section \ref{S2}, we present the real hyperbolic space, some geometric operators and the setting of Boussinesq systems; in Section \ref{S3}, we give the $L^p-L^q$-dispersive and smoothing estimates and the proofs of the global existence of linear and semilinear equations; in Section \ref{S4}, we provide the exponential stability and the application to existence of periodic mild solution for the Boussinesq system; in Appendix \ref{app}, we discuss about the influence of gravitational fields to the well-posedness and give some boundedness of improper integrals.\\
{\bf Notations.}
Through this paper we use the following notations:\\
$\bullet$ The space of the bounded and continuous functions from $\r_+$ to the space $L^r(X)$ is denoted by $C_b(\r_+,L^r(X))$.\\
$\bullet$ The norm on the Cartesian product space $ L^r({\mathbf M};\Gamma(T{\mathbf M})) \times  L^r({\mathbf M};\mathbb{R}) $ is defined by 
$$\norm{(u,\theta)}_{L^r\times L^r} = \max\left\{ \norm{u}_{L^r({\mathbf M};\Gamma(T{\mathbf M}))},\norm{\theta}_{L^r({\mathbf M};\mathbb{R})}  \right\}.$$

\section{Boussinesq system on the real hyperbolic manifold}\label{S2}
Let  $({\bf M}=:\mathbb{H}^d(\mathbb{R}),g)$  be a real hyperbolic manifold of dimension $d\geq 2$ which is realized as the upper sheet 
$$x_0^2-x_1^2-x_2^2...-x_d^2 = 1 \,  \,( x_0\geq 1),$$
of hyperboloid in $\mathbb{R}^{d+1}$, equipped with the Riemannian metric 
$$g := -dx_0^2 + dx_1^2 + ... + dx_d^2.$$
In geodesic polar coordinates, the hyperbolic manifold is 
$$\mathbb{H}^d(\mathbb{R}): = \left\{ (\cosh \tau, \omega \sinh \tau), \, \tau\geq 0, \omega \in \mathbb{S}^{d-1}  \right\}$$
with the metric 
$$g := d\tau^2+(\sinh\tau)^2d\omega^2$$
where  $d\omega^2$ is the canonical metric on the sphere $\mathbb{S}^{d-1}$.  
A remarkable property on ${\bf M}$ is the Ricci curvature tensor : $\mathrm{Ric}_{ij}=-(d-1)g_{ij}$. We refer readers to the reference \cite{Pa} for more details about the hyperbolic geometry. 

In order to define Laplace operator on manifolds, Ebin and Marsden introduced the vectorial laplace $L$ on vector field $u$ by using the deformation tensor (see \cite{EbiMa} and more details in \cite{Tay,Pi}):
$$Lu := \frac{1}{2}\mathrm{div}(\nabla u + \nabla u^t)^{\sharp},$$
where $\omega^{\sharp}$ is a vector field associated with the 1-form $\omega$ by $g(\omega^{\sharp},Y) = \omega(Y) \, \forall Y \in \Gamma(T{\bf M})$.
Since $\mathrm{div}\, u=0$ , $L$ can be expressed as 
$$Lu = \overrightarrow{\Delta}u + R(u),$$
where $\overrightarrow{\Delta}u =- \nabla^*\nabla u= \mathrm{Tr}_g(\nabla^2u)$ is the Bochner-Laplace
and $R(u)=(\mathrm{Ric}(u,\cdot))^{\sharp}$ is the Ricci operator. Since $\mathrm{Ric}(u,\cdot)=-(d-1)g(u,\cdot)$, we have $R(u)=-(d-1)u$ and
$$Lu = \overrightarrow{\Delta}u -(d-1)u.$$
By using the Weitzenb\"ock formula on $1$-form $u^\flat$ (which is associated with $u$ by $g(u,Y) = u^\flat(Y), \, Y\in \Gamma(T{\bf M})$):
$$\Delta_H u^\flat = \nabla^*\nabla u^\flat + \mathrm{Ric}(u,.),$$
where $\Delta_H = d^*d+dd^*$ is the Hodge-Laplace on $1$-forms, we can also relate $L$ to
the Hodge-Laplace
$$Lu = \left( -\Delta_H u^\flat + 2\mathrm{Ric}(u,.)  \right)^\sharp.$$

For simplicity we consider the incompressible Boussinesq system on the real hyperbolic manifold ${\bf M}$ with the volume expansion coefficient $\kappa=1$:
\begin{equation}\label{BouEq} 
\left\{
  \begin{array}{rll}
 u_t + (u\cdot\nabla)u - L u + \nabla p \!\! &= \theta h + \dive F, \hfill \\
\nabla \cdot u \!\!&=\; 0, \\
\theta_t - \widetilde{L} \theta + (u \cdot \nabla)\theta \!\!&=\; \dive f, \\
u(0) \!\!& = \;u_0,\\
\theta(0) \!\!& = \;\theta_0,\\
\end{array}\right.
\end{equation}
where $L= -(d-1)+ \overrightarrow{\Delta}$ is Ebin-Marsden's Laplace operator, $\widetilde{L}= \Delta_g$ is Laplace-Beltrami operator associated with metric $g$. 
The functions $f: {\bf M}\times \mathbb{R} \to \Gamma(T{\bf M})$ is given such that $\dive f$ represents the reference temperature and $F: {\bf M} \times \mathbb{R} \to \Gamma(T{\bf M}\otimes T{\bf M})$ is a second order tensor fields such that $\dive F$ represents the external force.
The unknowns are $u(x,t): {\bf M} \times \r \to \Gamma(T{\bf M}), p(x,t): {\bf M} \times \r \to \mathbb{R}$ and $\theta(x,t): {\bf M} \times \r \to \mathbb{R}$ representing respectively, the velocity field, the pressure and the temperature of the fluid at point $(x,t) \in  {\bf M}\times \r$.
Normaly, the gravitational field $h$ does not depend on time (see \cite{Barrow2020} for the formula of gravitational fied on hyperbolic spaces). However, in this paper, we will consider a more general case, where $h:{\bf M}\times \mathbb{R}_+ \to \Gamma(T{\bf M})$ depends on time and satisfies the following assumption (see Appendix \ref{B} for the discussion of gravitational field) which guarantees the regularity for elliptic problem to determine the pressure $p$:
\begin{assumption}\label{Assum}
Assume that function $h(\cdot,t)$ satisfies
\begin{equation}
h\in C_b(\mathbb{R}_+, L^\infty(\Gamma(T{\bf M}))) \hbox{   and   } h\in C_b(\mathbb{R}_+, L^{\frac{d}{2},\infty}(\Gamma(T{\bf M}))).
\end{equation}
\end{assumption}

Taking divergence to the first equation of system \eqref{BouEq1}, we get
\begin{equation}\label{EllipticEq}
\Delta_g p = \dive[-\dive(u\otimes u) + \theta h + \dive F]. 
\end{equation} 
If we consider $u(\cdot,t) \in L^{p}({\bf M};\Gamma(T{\bf M}))$, $\theta(\cdot,t) \in L^p({\bf M}; \mathbb{R})$, $h(\cdot,t)\in L^{\infty}({\bf M};\Gamma(T{\bf M}))$ and $F(\cdot,t)\in L^{p/2}({\bf M};\Gamma(T{\bf M}\otimes T{\bf M}))$, then we have $-\dive(u\otimes u) + \theta h + \dive F \in L^p({\bf M};\Gamma(T{\bf M}))$. 
Moreover, the spectral of $\Delta_g$ on hyperbolic manifold ${\bf M} = \mathbb{H}^d(\mathbb{R})$ is $\left(-\infty, -\frac{(d-1)^2}{4}\right]$ which does not contain $0$, then operator $\Delta_g \,: \, W^{2,q}({\bf M};\mathbb{R}) \rightarrow L^q({\bf M};\mathbb{R})$ is an isomorphism for $2\leq q <\infty$. Therefore, for $p>1$, we can choose the solution of elliptic equation \eqref{EllipticEq} by
\begin{equation}
p= \Delta_g^{-1}\dive[-\dive(u\otimes u) + \theta h + \dive F].
\end{equation}
Hence
\begin{equation}
\nabla p= \nabla(-\Delta_g)^{-1}\dive[\dive(u\otimes u) - \theta h - \dive F].
\end{equation}
Since Riesz transforms are $L^p$-bounded on real hyperbolic manifolds (see \cite{Loho}), we obtain that the operator $\nabla(-\Delta_g)^{-1} \mathrm{div}:L^p({\bf M};\Gamma(T{\bf M})) \to L^p({\bf M};\Gamma(T{\bf M}))$ is bounded. Therefore, we have $\nabla p\in L^p({\bf M};\Gamma(T{\bf M}))$.

Since we have
$\dive(\theta u) = \theta \dive u + (\nabla \theta)\cdot u = (u\cdot \nabla)\theta$ for $\dive u=0$, the Boussinesq system \eqref{BouEq} can be rewritten as
\begin{equation}\label{BouEq1} 
\left\{
  \begin{array}{rll}
 u_t + \dive(u\otimes u) - L u + \nabla p \!\! &= \theta h + \dive F, \hfill \\
\nabla \cdot u \!\!&=\; 0, \\
\theta_t - \widetilde{L} \theta + \dive(\theta u) \!\!&=\; \dive f, \\
u(0,x) \!\!& = \;u_0(x) \in \Gamma(T{\bf M}),\\
\theta(0,x) \!\!& = \;\theta_0(x) \in \mathbb{R},\\
\end{array}\right.
\end{equation}
Applying the Kodaira-Hodge operator $\mathbb{P}:= I + \nabla(-\Delta_g)^{-1} \mathrm{div}$ to the system \eqref{BouEq1}, by the same manner to Navier-Stokes equation (see \cite{Pi}) we get
\begin{align}\label{AbstractE}
\begin{cases}
u_t &= Lu + \mathbb{P}(\theta h) + \mathbb{P}\dive (-u\otimes u + F),\\
\nabla\cdot u &=0\\
\theta_t &= \widetilde{L}\theta + \dive(-\theta u) + \dive f,\\
u(0) &= u_0,\, \theta(0) = \theta_0.
\end{cases}
\end{align}

Considering system \eqref{AbstractE} with $(u,\theta)$ in the Cartesian product space $C_b(\mathbb{R}_+,L^p({\bf M};\Gamma(T{\bf M})))\times C_b(\mathbb{R}_+,L^p({\bf M};\mathbb{R}))$. 
We set $\mathcal{A}:=%
\begin{bmatrix}
-L & 0\\
0 & -\widetilde{L}
\end{bmatrix}
$ acting on the space $L^p({\bf M};\Gamma(T{\bf M}))\times L^p({\bf M};\mathbb{R})$. By using Duhamel's principle in a matrix form, we get the following
integral formulation for (\ref{AbstractE}):
\begin{equation}\label{BouMatrixEq}
Z(t)
=e^{-t\A}Z_0
+B(Z,Z)  (t)+T_h(\theta)(t) + \mathbb{T} \left(
\begin{bmatrix}
F\\
f
\end{bmatrix}
\right)(t), 
\end{equation}
where $Z_0=(u_0,\theta_0)$, $Z(t)=(u(t),\theta(t))$ and the bilinear, linear-coupling and external forced operators used in the above equation
are given respectively by
\begin{equation}
B(Z_1,Z_2)  (t)=-\int_{0}^{t} e^{-(t-s)\A}\dive
\begin{bmatrix}
\mathbb{P}(u\otimes v)\\
u\xi
\end{bmatrix}
(s)ds, \, Z_1 = \begin{bmatrix}u\\\theta\end{bmatrix},\, Z_2 = \begin{bmatrix}v\\\xi\end{bmatrix} \label{Bilinear},
\end{equation}
 
\begin{equation}\label{LinearT}
T_h(\theta)(t)=\int_{0}^{t}e^{-(t-s)\A}%
\begin{bmatrix}
\mathbb{P}(\theta h)\\
0
\end{bmatrix}
(s)ds,\, 
\mathbb{T} \left(
\begin{bmatrix}
F\\
f
\end{bmatrix}
\right)(t)= \int_0^t e^{-(t-s)\A}\dive
\begin{bmatrix}
\mathbb{P} (F)\\
f
\end{bmatrix}
(s) ds.   
\end{equation}

\section{The global existence}\label{S3}

\subsection{Some useful estimates} 
To establish the well-posedness for equation \eqref{LinearE}, we first prove the $L^p-L^q$-dispersive and smoothing estimates for the matrix semigroup $e^{-t\mathcal{A}}$: 
\begin{lemma}\label{estimates}
\begin{itemize}
\item[$(i)$] For $t>0$, and $p$, $q$ such that $1\leq p \leq q \leq \infty$, the following dispersive estimates hold: 
\begin{equation}\label{dispersive}
\left\| e^{-t \A} Z_0\right\|_{L^q\times L^q} \leq [h_d(t)]^{\frac{1}{p}-\frac{1}{q}}e^{-t(\gamma_{p,q})}\left\| Z_0 \right\|_{L^p\times L^p} 
\end{equation}
for all $Z_0=(u_0,\theta_0) \in L^p({\mathbf M};\Gamma(T{\mathbf M})) \times L^p({\mathbf M};\mathbb{R})$, 
 where 
 $$h_d(t): = C\max\left( \frac{1}{t^{d/2}},1 \right),\, 
   \gamma_{p,q}:=\frac{\delta_d}{2}\left[ \left(\frac{1}{p} - \frac{1}{q} \right) + \frac{8}{q}\left( 1 - \frac{1}{p} \right) \right]$$ 
and $\delta_d$ are positive constants depending only on $d$.  
\item[$(ii)$] For $p$ and $q$ such that $1<p\leq q <\infty$ we obtain for all $t>0$ that
\begin{equation}
\left\| e^{-t\A}\mathrm{div} Z_0^\sharp \right\|_{L^q\times L^q} \leq [h_d(t)]^{\frac{1}{p}-\frac{1}{q}+\frac{1}{d}}e^{-t\left( \frac{\gamma_{q,q}+\gamma_{p,q}}{2} \right)} \left\| Z_0^\sharp \right\|_{L^p\times L^p},
\end{equation}
where $Z_0^\sharp = (T^{\sharp}_0,U^{\sharp}_0)$, for all tensor $T^{\sharp}_0 \in L^p({\mathbf M};\Gamma(T{\mathbf M}\otimes T{\mathbf M}))$ and all vector field $U_0^{\sharp} \in L^p({\mathbf M}; \Gamma(T{\mathbf M}))$. 
\end{itemize}
\end{lemma}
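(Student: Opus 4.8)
The plan is to exploit the block-diagonal structure of $\mathcal{A}$, which reduces both estimates to the two underlying heat semigroups, and to recombine the components through the max-norm on the product space. Since $\mathcal{A}=\begin{bmatrix}-L&0\\0&-\widetilde L\end{bmatrix}$, the matrix semigroup splits as $e^{-t\mathcal{A}}=\begin{bmatrix}e^{tL}&0\\0&e^{t\widetilde L}\end{bmatrix}$, so that
$$\left\|e^{-t\mathcal{A}}\begin{bmatrix}u_0\\\theta_0\end{bmatrix}\right\|_{L^q\times L^q}=\max\left\{\|e^{tL}u_0\|_{L^q},\ \|e^{t\widetilde L}\theta_0\|_{L^q}\right\}.$$
For each factor I would invoke the $L^p$–$L^q$ dispersive estimates for the Ebin–Marsden semigroup $e^{tL}$ and the Laplace–Beltrami (heat) semigroup $e^{t\widetilde L}$ established by Pierfelice \cite{Pi}; each has the form $[h_d(t)]^{1/p-1/q}$ times an exponentially decaying factor, the decay coming from the spectral gap of the Laplacian on $\mathbb H^d$ together with the $-(d-1)$ shift in $L$. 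The only genuine point to check for i) is that the single exponent $\gamma_{p,q}$ displayed in the statement is dominated by (equivalently, is the minimum of) the two individual decay rates $\gamma^{(1)}_{p,q},\gamma^{(2)}_{p,q}$, so that after pulling the common prefactor out of the maximum one gets $\max\{e^{-t\gamma^{(1)}_{p,q}}\|u_0\|_{L^p}, e^{-t\gamma^{(2)}_{p,q}}\|\theta_0\|_{L^p}\}\le e^{-t\gamma_{p,q}}\|(u_0,\theta_0)\|_{L^p\times L^p}$.

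For ii) I would again pass to the two diagonal components via the max-norm, so that it suffices to prove the smoothing bound for $e^{tL}\mathrm{div}$ on tensors and for $e^{t\widetilde L}\mathrm{div}$ on vector fields. The mechanism producing both the extra regularity gain $[h_d(t)]^{1/d}$ and the averaged rate $\tfrac12(\gamma_{q,q}+\gamma_{p,q})$ is the semigroup law: writing $e^{-t\mathcal{A}}\mathrm{div}=\bigl(e^{-(t/2)\mathcal{A}}\mathrm{div}\bigr)\,e^{-(t/2)\mathcal{A}}$, I would estimate the second factor by the dispersive bound of part i) ($L^p\to L^q$, rate $\gamma_{p,q}$, gain $[h_d(t/2)]^{1/p-1/q}$) and the first factor by Pierfelice's gradient/smoothing estimate ($L^q\to L^q$, rate $\gamma_{q,q}$, one-derivative gain $[h_d(t/2)]^{1/d}$). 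Multiplying the two bounds yields exactly the exponent $1/p-1/q+1/d$ and the rate $\tfrac12(\gamma_{p,q}+\gamma_{q,q})$, after replacing $h_d(t/2)$ by $C\,h_d(t)$ — legitimate since $h_d(t/2)=C\max(2^{d/2}t^{-d/2},1)\le C'h_d(t)$, the constant being absorbed into the $C$ hidden in $h_d$.

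The step I expect to be the main obstacle is justifying the factorization in ii), namely that $\mathrm{div}$ may be commuted through the heat flow so that $e^{-(t/2)\mathcal{A}}\mathrm{div}$ can be treated as a pure gradient-smoothing operator acting at the fixed integrability index $q$. On $\mathbb H^d$ this rests on the intertwining of $\mathrm{div}$ with the Bochner/Hodge and the scalar Laplacians, which on a constant-curvature space holds up to the curvature shifts already built into the definitions of $L$ and $\widetilde L$; making this precise, and checking that the resulting shifts are exactly those encoded in $\gamma_{q,q}$ and $\gamma_{p,q}$, is where the curvature geometry enters. By contrast, the reduction of the product-space statements to the scalar and vectorial heat-semigroup estimates through the max-norm, and the bookkeeping of the $h_d(t/2)\to h_d(t)$ constants, are routine.
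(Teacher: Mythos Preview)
Your reduction to the block-diagonal structure and your handling of part~i) match the paper exactly. The paper confirms your expectation about the common rate: Pierfelice's estimate for the vectorial semigroup $e^{tL}$ actually carries the better exponent $d-1+\gamma_{p,q}$ (coming from the Ricci shift in $L=\overrightarrow{\Delta}-(d-1)$), while the scalar heat semigroup $e^{t\widetilde L}$ gives only $\gamma_{p,q}$; the paper simply discards the extra $d-1$ so that a single rate covers both blocks, which is precisely the ``minimum'' you anticipated.

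For part~ii) you take a longer route than the paper. The paper does not factor the semigroup at all: it simply quotes \cite[Corollary~4.3]{Pi} separately for $e^{tL}\mathrm{div}$ acting on $2$-tensors and for $e^{t\widetilde L}\mathrm{div}$ acting on vector fields, each already in the finished form $[h_d(t)]^{1/p-1/q+1/d}e^{-t(\gamma_{q,q}+\gamma_{p,q})/2}$, and then takes the maximum. Your splitting argument is essentially a sketch of how Pierfelice obtains that corollary, not of the lemma itself. Note also that your factorization $e^{-t\mathcal A}\mathrm{div}=(e^{-(t/2)\mathcal A}\mathrm{div})\,e^{-(t/2)\mathcal A}$ is ill-typed as written: the right-most $e^{-(t/2)\mathcal A}$ would have to act on a (tensor, vector) pair, whereas $\mathcal A$ is only defined on (vector, scalar) pairs. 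Repairing this requires exactly the intertwining of $\mathrm{div}$ with the Bochner Laplacians on the higher bundles that you flag as the main obstacle---and that is precisely the content already packaged into \cite[Corollary~4.3]{Pi}. So there is no error in your plan, but you are redoing work that the paper (and the lemma) treats as a black box; citing the corollary directly, as the paper does, closes the argument in one line.
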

\begin{proof}
We use the fact that
$ e^{-t\A} =  \begin{bmatrix}
e^{tL}&&0\\
0&& e^{t\widetilde{L}}
\end{bmatrix}$   
and the $L^p-L^q$-dispersive and smoothing estimates of the semigroup $e^{tL}$ (associated with Ebin-Marsden's Laplace operator) and the heat semigroup $e^{t\widetilde{L}}$ (associated with Laplace-Beltrami operator $\Delta_g$) which are proved by Pierfelice \cite{Pi}. In particular, assertion $i)$ is valid since
 the fact that: for $t>0$, and $p$, $q$ such that $1\leq p \leq q \leq \infty$, 
the following $L^p-L^q$-dispersive estimates hold (see \cite[Theorem 4.1]{Pi} and its proof): 
\begin{equation}
\left\| e^{t L} u_0\right\|_{L^q} \leq [h_d(t)]^{\frac{1}{p}-\frac{1}{q}}e^{-t(d-1 + \gamma_{p,q})}\left\| u_0 \right\|_{L^p} \leq [h_d(t)]^{\frac{1}{p}-\frac{1}{q}}e^{-t(\gamma_{p,q})}\left\| u_0 \right\|_{L^p}
\end{equation}
for all $u_0 \in L^p({\mathbf M};\Gamma(T{\mathbf M}))$ and
\begin{equation}
\left\| e^{t\widetilde{L} } \theta_0\right\|_{L^q} \leq [h_d(t)]^{\frac{1}{p}-\frac{1}{q}}e^{-t( \gamma_{p,q})}\left\| \theta_0 \right\|_{L^p}\hbox{ for all }\theta_0 \in L^p({\mathbf M};\mathbb{R}),    
\end{equation}
where $h_d(t): = C\max\left( \frac{1}{t^{d/2}},1 \right)$, 
   $\gamma_{p,q}:=\frac{\delta_d}{2}\left[ \left(\frac{1}{p} - \frac{1}{q} \right) + \frac{8}{q}\left( 1 - \frac{1}{p} \right) \right]$ and $\delta_d$ are positive constants depending only on $d$.  

Assertion $ii)$ comes from the following $L^p-L^q$-smoothing estimates: for $1<p\leq q <\infty$ and $t>0$ we have (see \cite[Corollary 4.3]{Pi} and its proof):
\begin{eqnarray}
\left\| e^{tL}\mathrm{div}T^{\sharp}_0 \right\|_{L^q} &\leq& [h_d(t)]^{\frac{1}{p}-\frac{1}{q}+\frac{1}{d}}e^{-t\left(d-1 + \frac{\gamma_{q,q}+\gamma_{p,q}}{2} \right)} \left\| T_0^{\sharp}\right\|_{L^p}\cr
&\leq& [h_d(t)]^{\frac{1}{p}-\frac{1}{q}+\frac{1}{d}}e^{-t\left( \frac{\gamma_{q,q}+\gamma_{p,q}}{2} \right)} \left\| T_0^{\sharp}\right\|_{L^p}
\end{eqnarray}
and
\begin{equation}
\left\| e^{t\widetilde{L}}\mathrm{div}U^{\sharp}_0 \right\|_{L^q} \leq [h_d(t)]^{\frac{1}{p}-\frac{1}{q}+\frac{1}{d}}e^{-t\left(\frac{\gamma_{q,q}+\gamma_{p,q}}{2} \right)} \left\| U_0^{\sharp}\right\|_{L^p}    
\end{equation}
for all tensor $T^{\sharp}_0 \in L^p({\mathbf M};\Gamma(T{\mathbf M}\otimes T{\mathbf M}))$ and vector field $U_0^\sharp \in L^p({\mathbf M}; T{\mathbf M})$. 
\end{proof}

Setting  $\mathbb{L}^\eta({\bf M}) = L^\eta(\mathbf M;\Gamma(T\mathbf{M}))\times L^\eta(\mathbf M;\r))$, where $\eta>0$. 
Using Lemma \ref{estimates}, we obtain the some useful linear estimates for $T_h(\cdot)$ in the following lemma. 
\begin{lemma}\label{LinearEst1}
Let $({\bf M},g)$ be a $d$-dimensional real hyperbolic manifold with $d\geqslant 2$ and $1<p\leq d$, $0<\delta<1$. The following assertion hold

(i) For $h \in C_b(\r_+,L^{\frac{d}{\delta}}({\bf M}; \Gamma(T{\bf M})))$ and $\eta\in C_b(\r_+,L^p({\bf M},\mathbb{R}))$, we have for all $t>0$:
\begin{equation}\label{Li1}
\norm{T_h(\eta)(t)}_{\mathbb{L}^p} \leqslant N_1\norm{h}_{\infty,L^{\frac{d}{\delta}}}\norm{\eta}_{\infty,L^p}.
\end{equation}
	
(ii) For $h \in C_b(\r_+,L^{\frac{d}{\delta}}({\bf M}; \Gamma(T{\bf M})))$ and $\eta\in C_b(\r_+,L^d({\bf M},\mathbb{R}))$, we have for all $t>0$:
\begin{equation}\label{Li2}
\norm{T_h(\eta)(t)}_{\mathbb{L}^d} \leqslant N_2\norm{h}_{\infty,L^{\frac{d}{\delta}}}\norm{\eta}_{\infty,L^d}.
\end{equation}
	
(iii) For $h \in C_b(\r_+,L^{\frac{d}{\delta}}({\bf M}; \Gamma(T{\bf M})))$ and $\eta\in C_b(\r_+,L^{\frac{d}{\delta}}({\bf M},\mathbb{R}))$, we have for all $t>0$:
\begin{equation}\label{Li3}
[h_d(t)]^{-\frac{1-\delta}{d}}e^{\alpha t}\norm{T_h(\eta)(t)}_{\mathbb{L}^{\frac{d}{\delta}}} \leqslant N_3\norm{h}_{\infty,L^{\frac{d}{\delta}}}\sup_{t>0}\left([h_d(t)]^{-\frac{1-\delta}{d}}e^{\alpha t}\norm{\eta(t)}_{L^{\frac{d}{\delta}}}\right).
\end{equation}
Here, the positive constants $N_1,N_2$ and $N_3$ are not dependent on $h$ and $\eta$.
\end{lemma}
\begin{proof}
\item[$(i)$] Using formula of $T_h(\cdot)$ in \eqref{LinearT}, Lemma \ref{estimates}$(i)$ and Holder's inequality, we can estimate
\begin{eqnarray}
\norm{T_h(\eta)(t)}_{\mathbb{L}^p}&\leq&  \int_0^t \norm{e^{-(t-\tau)\cal{A}}\begin{bmatrix}\mathbb{P}(h\eta)(\tau)\\0\end{bmatrix}}_{\mathbb{L}^p} d\tau\cr
&\leq&\int_0^t [h_d(t-\tau)]^{\frac{\delta}{d}}e^{-\beta_1(t-\tau)}\norm{\begin{bmatrix}(h\eta)(\tau)\\0\end{bmatrix}}_{\mathbb{L}^{\frac{dp}{d+\delta p}}}d\tau\cr
&\leq& \int_0^t [h_d(t-\tau)]^{\frac{\delta}{d}}e^{-\beta_1(t-\tau)}\norm{h(\tau)}_{L^{\frac{d}{\delta}}}\norm{\eta(\tau)}_{L^{p}} d\tau\cr	
&\leq& \norm{h}_{\infty,L^{\frac{d}{\delta}}}\norm{\eta}_{\infty,L^p} \int_0^t [h_d(t-\tau)]^{\frac{\delta}{d}}e^{-\beta_1(t-\tau)}d\tau\cr
&\leq& \norm{h}_{\infty,L^{\frac{d}{\delta}}}\norm{\eta}_{\infty,L^p} \int_0^t C^{\frac{\delta}{d}}\left[  (t-\tau)^{-\frac{\delta}{2}}+1\right]  e^{-\beta_1(t-\tau)}d\tau\cr
&\leq&  \norm{h}_{\infty,L^{\frac{d}{\delta}}}\norm{\eta}_{\infty,L^p}  C^{\frac{\delta}{d}}\left[\beta_1^{\frac{\delta}{2}-1}{\bf\Gamma}\left( 1-\frac{\delta}{2}\right)  +\dfrac{1}{\beta_1} \right] \cr
&\leqslant&  N_1 \norm{h}_{\infty,L^{\frac{d}{\delta}}}\norm{\eta}_{\infty,L^p},
	\end{eqnarray}
where $\beta_1= \gamma_{dp/(1+\delta p),p}$, and $N_1= C^{\frac{\delta}{d}}\left[\beta_1^{\frac{\delta}{2}-1}{\bf\Gamma}\left( 1-\frac{\delta}{2}\right)  +\dfrac{1}{\beta_1} \right]$.

\item[$(ii)$] The second assertion is prove by the same manner as in Assertion $(i)$ with the constants $\beta_1$ and $N_1$ replaced by $\hat{\beta}_1= \gamma_{d/(1+\delta),d}$ and 
$N_2 = C^{\frac{\delta}{d}}\left[\hat{\beta}_1^{\frac{\delta}{2}-1}{\bf\Gamma}\left( 1-\frac{\delta}{2}\right)  +\dfrac{1}{\hat{\beta}_1} \right]$.

\item[$(iii)$] Now, we prove the third assertion. Using again Lemma \ref{estimates}$(i)$ and Holder's inequality, we can obtain  that
\begin{eqnarray}\label{ine2}
[h_d(t)]^{-\frac{1-\delta}{d}}e^{\alpha t} \norm{T_h(\eta)(t)}_{\mathbb{L}^{\frac{d}{\delta}}}
&\leq& [h_d(t)]^{-\frac{1-\delta}{d}} e^{\alpha t}\int_0^t \norm{e^{-(t-\tau)\cal{A}}\begin{bmatrix}\mathbb{P}(h\eta)(\tau)\\0\end{bmatrix}}_{\mathbb{L}^{\frac{d}{\delta}}}d\tau\cr
&\leq& [h_d(t)]^{-\frac{1-\delta}{d}} e^{\alpha t}\int_0^t [h_d(t-\tau)]^{\frac{\delta}{d}}e^{-\tilde{\beta}_1(t-\tau)}\norm{\begin{bmatrix}(h\eta)(\tau)\\0\end{bmatrix}}_{\mathbb{L}^{\frac{d}{2\delta}}}d\tau\cr
&\leq& [h_d(t)]^{-\frac{1-\delta}{d}} e^{\alpha t} \int_0^t [h_d(t-\tau)]^{\frac{\delta}{d}}e^{-\tilde{\beta}_1(t-\tau)}\norm{h(\tau)}_{L^{\frac{d}{\delta}}}\norm{\eta(\tau)}_{L^{\frac{d}{\delta}}}d\tau\cr
&\leq& \norm{h}_{\infty,L^{\frac{d}{\delta}}}\sup_{t>0}\left([h_d(t)]^{-\frac{1-\delta}{d}}e^{\alpha t}\norm{\eta(t)}_{L^{\frac{d}{\delta}}}\right)\cr
&&\times \int_0^t [h_d(t-\tau)]^{\frac{\delta}{d}}[h_d(\tau)]^{\frac{1-\delta}{d}}e^{-({\tilde{\beta}_1}-\alpha)(t-\tau)} d\tau\cr
&\leq& N_3 \norm{h}_{\infty,L^{\frac{d}{\delta}}} \sup_{t>0}\left([h_d(t)]^{-\frac{1-\delta}{d}}e^{\alpha t}\norm{\eta(t)}_{L^{\frac{d}{\delta}}}\right),
\end{eqnarray}
where $\tilde{\beta}_1= \gamma_{d/2\delta,d/\delta}$ and $N_3 = \int_0^t [h_d(t-\tau)]^{\frac{\delta}{d}}[h_d(\tau)]^{\frac{1-\delta}{d}}e^{-({\tilde{\beta}_1}-\alpha)(t-\tau)} d\tau<+\infty$ (see Appendix \ref{app} for the boundedness of this improper integral).
\end{proof}

Setting $\mathcal{L}^\eta({\bf M}) = L^\eta(\mathbf M;\Gamma(T\mathbf{M}\otimes T\mathbf{M}) \times L^\eta (\mathbf M;\r\times\r)$.
The similar estimates as in Lemma \ref{LinearEst1} are valid for $\mathbb{T}(\cdot)$:
\begin{lemma}\label{LinearEst2}
Let $({\bf M},g)$ be a $d$-dimensional real hyperbolic manifold with $d\geqslant 2$ and $1<p\leq d$, $0<\delta<1$. The following assertion hold

(i) For $(F,f)\in C_b(\r_+, \mathcal{L}^{\frac{dp}{d+\delta p}}({\bf M}))$, we have for all $t>0$:
\begin{equation}\label{Li11}
\norm{\mathbb{T}(F,f)(t)}_{\mathbb{L}^p} \leqslant M_1\norm{(F,f)}_{\infty, \mathcal{L}^{\frac{dp}{d+\delta p}}}.
\end{equation}
	
(ii) For $(F,f)\in C_b(\r_+, \mathcal{L}^{\frac{d}{1+\delta}}({\bf M}))$, we have for all $t>0$:
\begin{equation}\label{Li22}
\norm{\mathbb{T}(F,f)(t)}_{\mathbb{L}^d} \leqslant M_2\norm{(F,f)}_{\infty, \mathcal{L}^{\frac{d}{1+\delta}}}.
\end{equation}
	
(iii) For $(F,f)\in C_b(\r_+, \mathcal{L}^{\frac{d}{2\delta}}({\bf M}))$, we have for all $t>0$:
\begin{equation}\label{Li33}
[h_d(t)]^{-\frac{1-\delta}{d}}e^{\alpha t}\norm{\mathbb{T}(F,f)(t)}_{\mathbb{L}^{\frac{d}{\delta}}} \leqslant M_3\sup_{t>0}\left( [h_d(t)]^{-\frac{1-\delta}{d}}e^{\alpha t} \norm{(F,f)(t)}_{\mathcal{L}^{\frac{d}{2\delta}}} \right).
\end{equation}
Here, the positive constants $R_1,R_2$ and $R_3$ are not dependent on $F$ and $f$.
\end{lemma}
\begin{proof}
\item[$(i)$] Using formula of $\mathbb{T}(\cdot)$ in \eqref{LinearT}, the boundedness of $\mathbb{P}$ (see \cite{Loho}) and Lemma \ref{estimates}$(ii)$, we have the following estimates
\begin{eqnarray}
\norm{\mathbb{T}\left( \begin{bmatrix}
	F\\f
	\end{bmatrix}\right) (t)}_{\mathbb{L}^p} 
&\leq& \int_0^t \norm{e^{-(t-\tau)\cal{A}}\dive \left( \begin{bmatrix}
	\mathbb{P}(F)\\f
	\end{bmatrix}\right)(\tau)}_{\mathbb{L}^p} d\tau\cr
&\leq& \int_0^t [h_d(t-\tau)]^{\frac{\delta}{d}+\frac{1}{d}}e^{-\beta_2(t-\tau)} \left\|\begin{bmatrix}
	F\\f
	\end{bmatrix} (\tau) \right\|_{\mathcal{L}^{\frac{dp}{d+\delta p}}} d\tau\cr
&\leq&  \int_0^t [h_d(t-\tau)]^{\frac{\delta}{d}+\frac{1}{d}}e^{-\beta_2(t-\tau)}  d\tau \norm{ \begin{bmatrix}
			F\\f
			\end{bmatrix}}_{\infty, \mathcal{L}^{\frac{dp}{d+\delta p}}}\cr
&\leq& \int_0^t C^{\frac{\delta+1}{d}}\left[  (t-\tau)^{-\frac{\delta+1}{2}}+1\right]  e^{-\beta_2(t-\tau)}  d\tau \norm{ \begin{bmatrix}
				F\\f
		\end{bmatrix}}_{\infty, \mathcal{L}^{\frac{dp}{d+\delta p}}}\cr
&\leq&   C^{\frac{\delta+1}{d}}\left[\beta_2^{\frac{\delta-1}{2}}{\bf\Gamma}\left( \frac{1}{2}-\frac{\delta}{2}\right)  +\dfrac{1}{\beta_2} \right]  \norm{ \begin{bmatrix}
			F\\f
	\end{bmatrix}}_{\infty, \mathcal{L}^{\frac{dp}{d+\delta p}}}	\cr
&\leq&  M_1\norm{\begin{bmatrix}
	F\\ f
\end{bmatrix}}_{\infty, \mathcal{L}^{\frac{dp}{d+\delta p}}},
	\end{eqnarray}
where $\beta_2= \frac{\gamma_{p,p} + \gamma_{dp/(1+\delta p),p}}{2}$ and $M_1=C^{\frac{\delta+1}{d}}\left[\beta_2^{\frac{\delta-1}{2}}{\bf\Gamma}\left( \frac{1}{2}-\frac{\delta}{2}\right)  +\dfrac{1}{\beta_2} \right].$

\item[$(ii)$] The proof is done by the same way as the one for Assertion $(i)$ with $\beta_2$ and $M_1$ replaced by $\hat{\beta}_2= \frac{\gamma_{d,d}+\gamma_{d/(1+\delta),d}}{2}$ and
$M_2=C^{\frac{\delta+1}{d}}\left[\hat{\beta}_2^{\frac{\delta-1}{2}}{\bf\Gamma}\left( \frac{1}{2}-\frac{\delta}{2}\right)  +\dfrac{1}{\hat{\beta}_2} \right].$

\item[$(iii)$] Using again Lemma \ref{estimates}$(ii)$ and the boundedness of $\mathbb{P}$, we can estimate
\begin{eqnarray}
&&[h_d(t)]^{-\frac{1-\delta}{d}}e^{\alpha t}\norm{\mathbb{T}\left( \begin{bmatrix}
			F\\f
		\end{bmatrix}\right) (t)}_{\mathbb{L}^{\frac{d}{\delta}}} \cr
&\leq& [h_d(t)]^{-\frac{1-\delta}{d}}e^{\alpha t}\int_0^t \norm{e^{-(t-\tau)\cal{A}}\dive \left( \begin{bmatrix}
			\mathbb{P}(F)\\f
		\end{bmatrix}\right)(\tau)}_{\mathbb{L}^{\frac{d}{\delta}}} d\tau\cr
&\leqslant& [h_d(t)]^{-\frac{1-\delta}{d}}e^{\alpha t}\int_0^t [h_d(t-\tau)]^{\frac{\delta}{d}+\frac{1}{d}}e^{-\tilde{\beta}_2(t-\tau)} \left\|\begin{bmatrix}
		F\\f
	\end{bmatrix} (\tau) \right\|_{\mathcal{L}^{\frac{d}{2\delta}}} d\tau\cr
&\leqslant&  \sup_{t>0}\left([h_d(t)]^{-\frac{1-\delta}{d}}e^{\alpha t}\norm{(F,f)(t)}_{\mathcal{L}^{\frac{d}{2\delta}}}\right) \int_0^t [h_d(t-\tau)]^{\frac{\delta+1}{d}}[h_d(\tau)]^{\frac{1-\delta}{d}}e^{-(\tilde{\beta}_2-\alpha)(t-\tau)} d\tau \cr 
&\leqslant& M_3\norm{\begin{bmatrix}
		F\\ f
\end{bmatrix}}_{\infty,\mathcal{L}^{\frac{d}{2\delta}}}.
\end{eqnarray}
where  $\tilde{\beta}_2= \frac{\gamma_{d/\delta,d/\delta} + \gamma_{d/2\delta,d/\delta}}{2}$ and $M_3 = \int_0^t [h_d(t-\tau)]^{\frac{\delta+1}{d}}[h_d(\tau)]^{\frac{1-\delta}{d}}e^{-(\tilde{\beta}_2-\alpha)(t-\tau)} d\tau<+\infty$ (see Appendix \ref{app} for the convergence of this improper integral).

\end{proof} 
 
\subsection{Bounded mild solutions for the linear equations}
For a given function $\eta$, we consider the following linear equation corresponding to the integral matrix equation \eqref{BouMatrixEq}: 
\begin{equation}\label{LinearE}
Z(t)=e^{-t\A}Z_0
+T_h(\eta)(t) + \mathbb{T} \left( \begin{bmatrix}
F\\
f
\end{bmatrix} \right) (t), 
\end{equation}
where $T_h(\cdot)$ and $\mathbb{T}(\cdot)$ are given by \eqref{LinearT}.

For $1<p\leq d$ and $0<\delta<1$, we establish the global well-posedness of mild solution on the half time-line axis to linear equation \eqref{LinearE} (and also semilinear equation \eqref{BouMatrixEq}) on the following space
 \begin{eqnarray*}
 	\mathcal{X}&=&\left\{Z\in C_b(\r_+, \mathbb{L}^p(\mathbf M) \cap \mathbb{L}^d(\mathbf M) \cap \mathbb{L}^{d/\delta}(\mathbf M)): \sup\limits_{t >0}\norm{Z(t)}^\blacklozenge  < +\infty \right\},
 \end{eqnarray*}
where $\norm{Z(t)}^\blacklozenge  =\norm{Z(t)}_{\mathbb{L}^p} + \norm{Z(t)}_{\mathbb{L}^d}  +[h_d(t)]^{-\frac{1-\delta}{d}}e^{\alpha t} \norm{Z(t)}_{\mathbb{L}^{\frac{d}{\delta}}}$. Clearly, the space $\mathcal{X}$ is a Banach space
equipped with the norm 
\begin{equation}\label{space1}
\norm{Z}_{\mathcal{X}} = \sup_{t>0}\norm{Z(t)}^\blacklozenge = \norm{Z}_{\infty,\mathbb{L}^p} + \norm{Z}_{\infty,\mathbb{L}^d} + \sup_{t>0}\left( [h_d(t)]^{-\frac{1-\delta}{d}}e^{\alpha t} \norm{Z(t)}_{\mathbb{L}^{\frac{d}{\delta}}} \right).
\end{equation}
For this purpose, we consider the external force $(F,f)$ in the following Banach space
$$\mathcal{Y} = \left\{ (F,f)\in \mathcal{Y} = C_b(\r_+, \mathcal{L}^{\frac{dp}{d+\delta p}}({\bf M})\cap \mathcal{L}^{\frac{d}{1+\delta}}({\bf M})\cap \mathcal{L}^{\frac{d}{2\delta}}({\bf M})): \sup_{t>0}\left( [h_d(t)]^{-\frac{1-\delta}{d}}e^{\alpha t}\norm{(F,f)}_{\mathcal{L}^{\frac{d}{2\delta}}} \right)<+\infty  \right\},$$
equipped with the norm
\begin{equation}
\norm{(F,f)}_{\mathcal{Y}} = \norm{(F,f)}_{\infty,\mathcal{L}^{\frac{dp}{d+\delta p}}} + \norm{(F,f)}_{\infty,\mathcal{L}^{\frac{d}{1+\delta}}} + \sup_{t>0}\left( [h_d(t)]^{-\frac{1-\delta}{d}}e^{\alpha t}\norm{(F,f)}_{\mathcal{L}^{\frac{d}{2\delta}}} \right). 
\end{equation}
\begin{remark}
Note that, the space $\mathcal{X}$ guarantees well-posedness in this work, which is based on lower-dimensional $L^p$-spaces in comparing  with the dimension of manifold $({\bf M},g)$, i.e., $p \leq d$. In a previous work \cite{Pi}, Pierfelice established the well-posedness of the Navier-Stokes equations (a specific case of the Boussinesq system when $\theta = 0$) in the space $X_T$, which is essentially constructed in a higher-dimensional setting, i.e., $p > d$ (for details, see the end of page 30 in \cite{Pi}). Therefore, our results, together with those obtained by Pierfelice, provide a complete range of dimensions for $L^p$-phase spaces that ensure the well-posedness of the Navier-Stokes equations on real hyperbolic spaces.
\end{remark}

The main theorem of this subsection is as follows:
\begin{theorem}\label{thm1}
Let $(\mathbf{M},g)$ be a $d$-dimensional real hyperbolic manifold with $d \geqslant 2$. Let $1<p\leq d$, $0<\delta<1$  and  $\;0<\alpha<\min\{\gamma_{d,d/\delta},\; \gamma_{d/2\delta,d/\delta},\; \frac{\gamma_{d/\delta,d/\delta}+\gamma_{d/(2\delta),d/\delta}}{2}\} $. Assume that the field $h$ satisfies Assumption \ref{Assum} and the external force $(F,f)\in \mathcal{Y}$. For a given initial data $Z_0=(u_0,\theta_0)\in \mathbb{L}^p({\bf M})\cap \mathbb{L}^d({\bf M})$ and  $(0,\eta) \in \mathcal{X}$, then linear equation \eqref{LinearE} has a unique bounded solution $Z\in \mathcal{X}$ satisfying 
\begin{equation}\label{Bounded}
 \norm{Z}_{\mathcal X} \leq \norm{Z_0}_{\mathbb{L}^p\cap \mathbb{L}^d} + N \left\| h \right\|_{\infty,\frac{d}{\delta}} \norm{(0,\eta)}_{\mathcal {X}} + M \norm{(F,f)}_{\mathcal{Y}},  
\end{equation}
 where the positive constants $M$ and $N$ are independent to  $h$, $\eta$, $F$ and $f$. 
\end{theorem}

\begin{proof}
We prove this theorem by using the dispersive estimates in Lemma \ref{estimates}$(i)$ and the linear estimates obtained in Lemma \ref{LinearEst1} and \ref{LinearEst1}. In particular,
from Assumption \ref{Assum} and interpolation inequality (see inequality (2.7) in \cite[Lemma 2.1]{Hi1997}), we obtain that $h \in C_b(\r_+,L^q(\Gamma(T{\bf M})))$ for $\frac{d}{2}<q\leqslant \infty$. Hence, we have $h \in C_b(\mathbb{R}_+, L^{\frac{d}{\delta}}({\bf M};\Gamma(T{\bf M})))$ for $0<\delta<1$.
Therefore, from above lemmas we  can estimate $Z(t)$ as follows
\begin{eqnarray}\label{ine1p}
\norm{Z(t)}_{\mathbb{L}^p} &\leq&  \norm{e^{-t\A} Z_0}_{\mathbb{L}^p} + \norm{T_h(\eta)(t)}_{\mathbb{L}^p}  +\norm{\mathbb{T}\left( \begin{bmatrix}
	F\\f
	\end{bmatrix}\right) (t)}_{\mathbb{L}^p} \cr
&\leq&  \norm{Z_0}_{\mathbb{L}^p}    + N_1 \norm{h}_{\infty,\frac{d}{\delta}}\norm{\eta}_{\infty, L^p}  +M_1\norm{\begin{bmatrix}
	F\\ f
\end{bmatrix}}_{\infty,\mathcal{L}^{\frac{dp}{d+\delta p}}}\cr
&\leq&  \norm{Z_0}_{\mathbb{L}^p}    + N_1 \norm{h}_{\infty,\frac{d}{\delta}}\norm{(0,\eta)}_{\mathcal{X}}  +M_1\norm{\begin{bmatrix}
	F\\ f
\end{bmatrix}}_{\mathcal{Y}}
	\end{eqnarray}
\begin{eqnarray}\label{ine1d}
\norm{Z(t)}_{\mathbb{L}^d} &\leq&  \norm{e^{-t\A} Z_0}_{\mathbb{L}^d} +  \norm{T_h(\eta)(t)}_{\mathbb{L}^d}  +\norm{\mathbb{T}\left( \begin{bmatrix}
			F\\f
		\end{bmatrix}\right) (t)}_{\mathbb{L}^d} \cr
&\leq&  \norm{Z_0}_{\mathbb{L}^d} +N_2\norm{h}_{\infty,L^{\frac{d}{\delta}}}\norm{\eta}_{\infty,L^d}  + M_2\norm{\begin{bmatrix}
			F\\ f
	\end{bmatrix}}_{\infty,\mathcal{L}^{\frac{d}{1+\delta}}}\cr
&\leq&  \norm{Z_0}_{\mathbb{L}^d} +N_2\norm{h}_{\infty,L^{\frac{d}{\delta}}}\norm{(0,\eta)}_{\mathcal{X}}  + M_2\norm{\begin{bmatrix}
			F\\ f
	\end{bmatrix}}_{\mathcal{Y}},	
\end{eqnarray}
and
\begin{eqnarray}\label{ine2}
[h_d(t)]^{-\frac{1-\delta}{d}}e^{\alpha t}\norm{Z(t)}_{\mathbb{L}^{\frac{d}{\delta}}}&\leq&  [h_d(t)]^{-\frac{1-\delta}{d}}e^{\alpha t}\left[ \norm{e^{-t\A} Z_0}_{\mathbb{L}^{\frac{d}{\delta}}}+  \norm{T_h(\eta)(t)}_{\mathbb{L}^{\frac{d}{\delta}}}  +\norm{\mathbb{T}\left( \begin{bmatrix}
			F\\f
		\end{bmatrix}\right) (t)}_{\mathbb{L}^{\frac{d}{\delta}}}\right]  \cr
&\leq& \norm{Z_0}_{\mathbb{L}^d} + N_3\norm{h}_{\infty,L^{\frac{d}{\delta}}}\sup_{t>0}\left([h_d(t)]^{-\frac{1-\delta}{d}}e^{\alpha t}\norm{\eta(t)}_{L^{\frac{d}{\delta}}} \right)\cr
&& + M_3\sup_{t>0}\left([h_d(t)]^{-\frac{1-\delta}{d}}e^{\alpha t}\norm{\begin{bmatrix}
		F\\ f
\end{bmatrix}}_{\infty, \mathcal{L}^{\frac{d}{2\delta}}} \right)\cr
&\leq& \norm{Z_0}_{\mathbb{L}^d} + N_3\norm{h}_{\infty,L^{\frac{d}{\delta}}}\norm{(0,\eta)}_{\mathcal{X}} + M_3\norm{\begin{bmatrix}
		F\\ f
\end{bmatrix}}_{\mathcal{Y}}.
\end{eqnarray}
By setting $N=N_1+N_2+N_3, M=M_1+M_2+M_3$ and combining the inequalities \eqref{ine1p}, \eqref{ine1d} and \eqref{ine2}, we obtain the boundedness \eqref{Bounded} which leads to the existence of solutions for linear equation \eqref{LinearE}. The uniqueness holds clearly. Our proof is completed.
\end{proof}

\subsection{Bounded mild solutions for Boussinesq systems}
To study bounded mild solutions for the semilinar equation \eqref{BouMatrixEq}, we need to estimate the bilinear operator $B(\cdot,\cdot)$ given by the formula \eqref{Bilinear}.
\begin{lemma}\label{BiE}
Let $({\bf M},g)$ be a $d$-dimensional real hyperbolic manifold with $d\geqslant 2$ and $1<p\leq d$. For $Z_1=(u,v)\in \mathcal{X},\, Z_2=(v,\xi)\in \mathcal{X}$, there exists a positive constant $K$ such that
\begin{equation}\label{Bilinear1}
\norm{B( Z_1,Z_2)(t)}_{\mathcal{X}} \leq K \norm{Z_1}_{\mathcal X}\norm{Z_2}_{\mathcal X}  
\end{equation}
for all $t>0$.	
\end{lemma}
\begin{proof}
The proof is similarly the one of boundedness of the linear operator $\mathbb{T}(\cdot)$ in Lemma \ref{LinearEst2} but we need more techniques by estimating the tensor product $u\otimes u$ and $u\xi$.
Using the boundedness of operator $\mathbb{P}$, the $L^p-L^q$-smoothing estimates in assertion $ii)$ of Lemma \ref{estimates} and H\"older's inequality we have that
\begin{eqnarray}\label{ine3}
\norm{B(Z_1,Z_2)(t)}_{\mathbb{L}^p} &=&\norm{B\left( \begin{bmatrix}
		u\\ \theta   
	\end{bmatrix}, \begin{bmatrix}
		v\\ \xi   
	\end{bmatrix}  \right)(t)}_{\mathbb{L}^p} 
\cr&&\leq \int_0^t  \norm{e^{-(t-\tau)\A}\dive \begin{bmatrix}
		\p(u\otimes v)(\tau)\\
		(u\xi)(\tau)
\end{bmatrix}}_{\mathbb{L}^p} d\tau  \cr
	&&\leq \int_0^t [h_d(t-\tau)]^{\frac{\delta+1}{d}}e^{-\beta_2(t-\tau)}\norm{\begin{bmatrix}(u\otimes v)(\tau)\\
			(u\xi)(\tau)\end{bmatrix}}_{\mathbb{L}^{\frac{dp}{d+\delta p}}}d\tau\cr
&&\leq\int_0^t [h_d(t-\tau)]^{\frac{\delta+1}{d}}e^{-\beta_2(t-\tau)}\norm{\begin{bmatrix}
		u(\tau)\\
		\theta(\tau)
\end{bmatrix}}_{\mathbb{L}^{\frac{d}{\delta}}}\norm{\begin{bmatrix}
		v(\tau)\\
		\xi(\tau)
\end{bmatrix}}_{\mathbb{L}^p} d\tau\cr
	&&\leq \norm{Z_1}_{\mathcal{X}}\norm{Z_2}_{\mathcal{X}}\int_0^t [h_d(t-\tau)]^{\frac{\delta+1}{d}}e^{-\beta_2(t-\tau)}[h_d(\tau)]^{\frac{1-\delta}{d}}e^{-\alpha \tau}d\tau\cr
	&&\leq K_1\norm{Z_1}_{\mathcal{X}}\norm{Z_2}_{\mathcal{X}},
\end{eqnarray}
where $\beta_2=\dfrac{\gamma_{p,p}+\gamma_{dp/(1+\delta p),p}}{2} $ and $K_1=\int_0^t [h_d(t-\tau)]^{\frac{\delta+1}{d}}e^{-\beta_2(t-\tau)}[h_d(\tau)]^{\frac{1-\delta}{d}}e^{-\alpha \tau}d\tau<+\infty$ (see Appendix \ref{app} for this boundedness).

By the same manner, we can estimate
\begin{eqnarray}\label{ine33}
\norm{B(Z_1,Z_2)(t)}_{\mathbb{L}^d} 	&=&\norm{B\left( \begin{bmatrix}
			u\\ \theta   
		\end{bmatrix}, \begin{bmatrix}
			v\\ \xi   
		\end{bmatrix}  \right)(t)}_{\mathbb{L}^d} 
\cr&&\leq \int_0^t  \norm{e^{-(t-\tau)\A}\dive \begin{bmatrix}
			\p(u\otimes v)(\tau)\\
			(u\xi)(\tau)
	\end{bmatrix}}_{\mathbb{L}^d} d\tau  \cr
	&&\leq \int_0^t [h_d(t-\tau)]^{\frac{1+\delta}{d}}e^{-\hat{\beta}_2(t-\tau)}\norm{\begin{bmatrix}(u\otimes v)(\tau)\\
			(u\xi)(\tau)\end{bmatrix}}_{\mathbb{L}^{\frac{d}{1+\delta}}}d\tau\cr
	&&\leq\int_0^t [h_d(t-\tau)]^{\frac{1+\delta}{d}}e^{-\hat{\beta}_2(t-\tau)}\norm{\begin{bmatrix}
			u(\tau)\\
			\theta(\tau)
	\end{bmatrix}}_{\mathbb{L}^{\frac{d}{\delta}}}\norm{\begin{bmatrix}
			v(\tau)\\
			\xi(\tau)
	\end{bmatrix}}_{\mathbb{L}^d} d\tau\cr
	&&\leq \norm{Z_1}_{\mathcal{X}}\norm{Z_2}_{\mathcal{X}}\int_0^t [h_d(t-\tau)]^{\frac{1+\delta}{d}}e^{-\hat{\beta}_2(t-\tau)}[h_d(\tau)]^{\frac{1-\delta}{d}}e^{-\alpha \tau}d\tau\cr
	&&\leq K_2\norm{Z_1}_{\mathcal{X}}\norm{Z_2}_{\mathcal{X}},
\end{eqnarray}
where $\hat{\beta}_2=\dfrac{\gamma_{d,d}+\gamma_{d/(1+\delta),d}}{2} $ and $K_2=\int_0^t [h_d(t-\tau)]^{\frac{1+\delta}{d}}e^{-\hat{\beta}_2(t-\tau)}[h_d(\tau)]^{\frac{1-\delta}{d}}e^{-\alpha \tau}d\tau<+\infty$ (see Appendix \ref{app} for this boundedness).

Lastly, we use the fact that $[h_d(t)]^{-\frac{1-\delta}{d}}\le C^{\frac{\delta-1}{d}}$ (for $t>0$) to estimate
\begin{eqnarray}\label{ine333}
&&[h_d(t)]^{-\frac{1-\delta}{d}}e^{\alpha t}\norm{B(Z_1,Z_2)(t)}_{\mathbb{L}^{\frac{d}{\delta}}}	\cr
&=&[h_d(t)]^{-\frac{1-\delta}{d}}e^{\alpha t}\norm{B\left( \begin{bmatrix}
			u\\ \theta   
		\end{bmatrix}, \begin{bmatrix}
			v\\ \xi   
		\end{bmatrix}  \right)(t)}_{\mathbb{L}^{\frac{d}{\delta}}}\cr
&\leq& C^{\frac{\delta-1}{d}}e^{\alpha t}\int_0^t  \norm{e^{-(t-\tau)\A}\dive \begin{bmatrix}
			\p(u\otimes v)(\tau)\\
			(u\xi)(\tau)
	\end{bmatrix}}_{\mathbb{L}^{\frac{d}{\delta}}}d\tau\cr
&\leq& C^{\frac{\delta-1}{d}}e^{\alpha t}\int_0^t [h_d(t-\tau)]^{\frac{\delta+1}{d}}e^{-\tilde{\beta}_2(t-\tau)}\norm{\begin{bmatrix}(u\otimes v)(\tau)\\
			(u\xi)(\tau)\end{bmatrix}}_{\mathbb{L}^{\frac{d}{2\delta}}}d\tau
	\cr
&\leq& C^{\frac{\delta-1}{d}}e^{\alpha t} \int_0^t [h_d(t-\tau)]^{\frac{\delta+1}{d}}e^{-\tilde{\beta}_2(t-\tau)}\norm{\begin{bmatrix}u(\tau)\\
		\theta(\tau)\end{bmatrix}}_{\mathbb{L}^{\frac{d}{\delta}}}\norm{\begin{bmatrix}v(\tau)\\  \xi(\tau)\end{bmatrix}}_{\mathbb{L}^{\frac{d}{\delta}}}d\tau
	 \cr
&\leq& C^{\frac{\delta-1}{d}}\norm{Z_1}_{\mathcal X}\norm{Z_2}_{\mathcal X} \int_0^t [h_d(t-\tau)]^{\frac{\delta+1}{d}}[h_d(\tau)]^{\frac{2(1-\delta)}{d}}e^{-(\tilde{\beta}_2-\alpha)(t-\tau)} e^{-\alpha\tau}d\tau\cr
&\leq& K_3 \norm{Z_1}_{\mathcal X}\norm{Z_2}_{\mathcal X},
\end{eqnarray}
where $\tilde{\beta}_2=\dfrac{\gamma_{d/\delta,d/\delta}+\gamma_{d/2\delta,d/\delta}}{2} $ and we used the convergence (see Appendix \ref{app} for this boundedness):
$$K_3=C^{\frac{\delta-1}{d}}\int_0^t [h_d(t-\tau)]^{\frac{\delta+1}{d}}[h_d(\tau)]^{\frac{2(1-\delta)}{d}}e^{-(\tilde{\beta}_2-\alpha)(t-\tau)} e^{-\alpha\tau}d\tau<+\infty.$$
Therefore, by setting $K=\max \{K_1, K_2, K_3\}$, we obtain \eqref{Bilinear1} from inequalities \eqref{ine3}, \eqref{ine33} and \eqref{ine333}.
\end{proof}

We state and prove the global well-posedness for semilinear equation \eqref{BouMatrixEq} in the following theorem.
\begin{theorem}\label{PeriodicThm}(Global in time mild solution)
Let $({\bf M},g)$ be a $d$-dimensional real hyperbolic manifold with $d\geq 2$. Let $1<p\leq d$ and $0<\delta<1$. 
Assume that the field $h$ satisfies Assumption \ref{Assum} and the external force $(F,f)\in \mathcal{Y}$. If the norms $\norm{Z_0}_{\mathbb{L}^p\cap \mathbb{L}^d}$, $\norm{h}_{\infty,L^{\frac{d}{\delta}}}$, $\norm{(F,f)}_{\mathcal{Y}}$  are sufficiently small, then equation \eqref{BouMatrixEq} has one and only one bounded mild solution $\hat{Z}(\cdot)=(\hat{u}(\cdot),\hat{\theta}(\cdot))$ in a small ball of $\mathcal X$.
\end{theorem} 
\begin{proof}
We denote the ball centered at $(0,0)$ with radius $\rho>0$ by 
\begin{eqnarray*}\label{bro}
\B_\rho &=&\{Z\in \mathcal X \hbox{ such that }   \norm{Z}_{\mathcal X}\le \rho \}.
\end{eqnarray*}
For a given function $W=(v,\eta)\in \B_\rho$, we consider the linear equation 
\begin{equation}\label{ns1}
Z(t)
=e^{-t\A}Z_0
+B\left(W,W\right)  (t)+T_h(\eta)(t) + \mathbb{T} \left(
\begin{bmatrix}
F\\
f
\end{bmatrix}
\right)(t).
\end{equation}
By applying the bilinear estimates in Lemma \ref{BiE} and the linear estimates in Lemma \ref{LinearEst1} and \ref{LinearEst2}, we obtain that: there exists a unique bounded mild solution $Z=(u,\theta)$ to \eqref{ns1} satisfying
\begin{eqnarray}\label{ephi}
\norm{Z}_{\mathcal X}&\le & C\norm{ Z_0}_{\mathbb{L}^p\cap \mathbb{L}^d} + K\norm{Z}^2_{\mathcal X}+ N \norm{h}_{\infty,L^{\frac{d}{\delta}}}\norm{(0,\eta)}_{\mathcal{X}}+  M\norm{(F,f)}_{\mathcal{Y}}\cr
&\le & C\norm{Z_0}_{\mathbb{L}^p\cap \mathbb{L}^d} + K\rho^2+ N \rho\norm{h}_{\infty,L^{\frac{d}{\delta}}} +  M\norm{(F,f)}_{\mathcal{Y}}\cr
&\leqslant& \rho
\end{eqnarray}
provided that $\rho$, $\norm{Z_0}_{\mathbb{L}^p\cap \mathbb{L}^d}$, $\norm{h}_{\infty,L^{\frac{d}{\delta}}}$ and $\norm{(F,f)}_{\mathcal{Y}}$ are small enough. Therefore, for these values we can define a map $\Phi: \mathcal{B}_\rho \to  \mathcal{B}_\rho$ as follows: $\Phi (W)(t)= Z(t)$
for all $t>0$, where $Z(\cdot)$ is a unique solution of linear equation \eqref{ns1}. Now, we prove that the mapping $\Phi$ is a contraction. Indeed, we have clearly
\begin{equation}\label{defphi1}
\Phi(W)(t) = Z_0 + B(W,W)(t) + T_h(\eta)(t) + \mathbb{T} \left( \begin{bmatrix}
    F\\ f
\end{bmatrix} \right)(t).
\end{equation}
Therefore, for all functions  $W_1=(u_1,\eta_1),\, W_2=(u_2,\eta_2)\in \B_\rho$, we apply again the bilinear estimates in Lemma \ref{BiE} 
and the linear estimate for $T(\cdot)$ in Lemma \ref{LinearEst1} to get 
\begin{eqnarray}\label{ephi33}
\norm{\Phi(W_1)-\Phi(W_2)}_{\mathcal X}
&=& \norm{ B(W_1,W_1) - B(W_2,W_2) + T_h(\eta_1-\eta_2)}_{\mathcal X}\cr
&\leqslant& \norm{B\left( \begin{bmatrix}u_1\\\eta_1\end{bmatrix},\begin{bmatrix}u_1\\\eta_1 \end{bmatrix}  \right) - B\left( \begin{bmatrix}u_2\\\eta_2\end{bmatrix},\begin{bmatrix}u_2\\\eta_2 \end{bmatrix} \right) + T_h(\eta_1-\eta_2)}_{\mathcal{X}}\cr
&\leq&\sup\limits_{t >0} \norm{ \int_0^t e^{-(t-\tau)\A}\dive \begin{bmatrix}
    u_2(u_2-u_1) + u_1(u_2-u_1)\\u_2(\eta_1-\eta_2) - \eta_1(u_2-u_1)
\end{bmatrix}(\tau)d\tau}^\blacklozenge   +\norm{  T_h(\eta_1-\eta_2)}_{\mathcal X}\cr
&\leq& \norm{ \begin{bmatrix}
    u_1-u_2\\\eta_1-\eta_2 \end{bmatrix}}_{\mathcal X} \left( \widehat{K}\norm{ \begin{bmatrix}
    u_1\\\eta_1 \end{bmatrix}}_{\mathcal X} + \widehat{K}\norm{ \begin{bmatrix}
    u_2\\ \eta_2 \end{bmatrix}}_{\mathcal X} + N \norm{h}_{\infty,L^{\frac{d}{\delta}}}\right)\cr
 &\leq&  \norm{W_1-W_2}_{\mathcal X} \left( 2\widehat{K}\rho + N \norm{h}_{\infty,L^{\frac{d}{\delta}}}\right).   
\end{eqnarray}
Therefore, for $\rho$ and $\norm{h}_{\infty,L^{\frac{d}{\delta}}}$ are small enough such that $\rho<\frac{1-N\norm{h}_{\infty,L^{\frac{d}{\delta}}}}{2\widehat{K}}$, we obtain that the mapping $\Phi$ is a contraction on $\B_\rho$.

Therefore, by fixed point arguments there exists a unique fixed point $\hat{W}=(\hat{u},\hat\theta)$ of $\Phi$, and by the definition of $\Phi$, this fixed point $\hat{W}$ is a bounded solution to semilinear equation \eqref{BouMatrixEq}.
The uniqueness of $\hat{W}$ in the small ball $\B_\rho$ holds as a direct consequence of \eqref{ephi33}. Our proof is complete.
\end{proof}

\section{Asymptotical behaviour and application}\label{S4}
\subsection{Exponential stability}
In this section, we prove the exponential stability of the mild solution to equation \eqref{BouMatrixEq} obtained in Theorem \ref{PeriodicThm}. Our main result is as follows: 
\begin{theorem}\label{stability}(Exponential stability).
Assume that the conditions of Theorem \ref{PeriodicThm} hold. Then, the mild solution $\hat{Z}=(\hat{u},\hat{\theta})$ of equation \eqref{BouMatrixEq} obtained in Theorem \ref{PeriodicThm} is exponentially stable in the sense that: for another  solution $\tilde{Z}=(\tilde{u},\tilde{\theta})\in \mathcal{X}$ of equation \eqref{BouMatrixEq} with the initial data $\tilde{Z}_0=(\tilde{u}_0,\tilde{\theta}_0)$ such that the norm $\norm{\hat{Z}_0-\tilde{Z}_0}_{\mathbb{L}^p\cap \mathbb{L}^d}$ is small enough, then we have 
	 \begin{eqnarray}\label{expstab}
	 	\norm{(\hat{Z}-\tilde{Z})(t)}^\blacklozenge	\lesssim  \norm{\hat{Z}_0-\tilde{Z}_0}_{\mathbb{L}^p\cap \mathbb{L}^d}  e^{-\bf\Theta t} \text{ for all } t>0,
	 \end{eqnarray} 
where ${\bf\Theta} = min\{\gamma_{p,p}, \beta_1,\beta_2, \hat{\beta}_1,\hat{\beta}_2,\tilde{\beta}_1-\alpha,\tilde{\beta}_2-\alpha \}$ with $\beta_i, \hat{\beta}_i, \tilde{\beta}_i,\; i={1,2}$ given in the previous sections.
\end{theorem} 
\begin{proof}
For $\hat{Z}=(\hat{u},\hat{\theta})$ and $\tilde{W}=(\tilde{u},\tilde{\theta})$ are two solutions of equation \eqref{BouMatrixEq} with initial data $(u_0,\theta_0)$ and $(\tilde{u}_0,\tilde{\theta}_0)$, respectively; we see that $(u-\tilde{u},\, \theta-\tilde{\theta})$ is solution of
\begin{eqnarray}\label{ephi3}
\hat{Z}(t)-\tilde{Z}(t)&=& e^{-t\A}(Z_0-\tilde{Z}_0)+ B\left(\begin{bmatrix}
				\hat{u}\\\hat{\theta}
		\end{bmatrix},\begin{bmatrix}
				\hat{u}\\\hat{\theta}
		\end{bmatrix} \right)(t) - B\left(\begin{bmatrix}
		\tilde{u}\\\tilde{\theta}
		\end{bmatrix},\begin{bmatrix}
		\tilde{u}\\\tilde{\theta}
		\end{bmatrix} \right)(t)\cr
		&&+ T_h(\hat\theta-\tilde{\theta}) \cr
&=&e^{-t\A}({Z}_0-\tilde{Z}_0)+  \int_0^t e^{-(t-\tau)\A}\dive \begin{bmatrix}
		\mathbb P[\tilde{u}(\tilde{u}-\hat{u}) + \hat{u}(\tilde{u}-\hat{u})]\\\tilde{u}(\hat\theta-\tilde{\theta})- \hat\theta(\tilde{u}-\hat{u})
		\end{bmatrix}(\tau)d\tau    \cr
		&&+  \int_0^t e^{-(t-\tau)\A} \begin{bmatrix}
			\mathbb P[h(\hat\theta-\tilde{\theta}) \\ 0
		\end{bmatrix}(\tau)d\tau.
	\end{eqnarray}
By the same manner as in the proof of Theorem \ref{PeriodicThm}, the equation \eqref{ephi3} has a unique mild solution $\hat{W}-\tilde{W}=(\hat{u}-\tilde{u},\, \hat{\theta}-\tilde{\theta})$ in a small ball $\mathcal{B}_{2\rho}$ of $\mathcal{X}$ provided that the norm
$\norm{Z_0-\tilde{Z}_0}_{\mathbb{L}^p\cap \mathbb{L}^d}$, $\norm{h}_{\infty,L^{d/\delta}}$ is small enough. In particular, we can chose $\norm{Z_0}_{\mathbb{L}^p\cap \mathbb{L}^d} \leq \rho,\, \norm{\tilde{Z}_0}_{\mathbb{L}^p\cap\mathbb{L}^d} \leq \rho$ for a given $\rho>0$. 

By the same way as in the proofs of Lemma \ref{LinearEst1} and Lemma \ref{BiE}, we can estimate that	
\begin{eqnarray}\label{Sta1}
\norm{\hat{Z}(t)-\tilde{Z}(t)}_{\mathbb{L}^p} &\leq&  \norm{e^{-t\A}(Z_0-\tilde{Z}_0)}_{\mathbb{L}^p}  +  \int_0^t\norm{ e^{-(t-\tau)\A}\dive \begin{bmatrix}
	\mathbb P[\tilde{u}(\tilde{u}-u) + \hat{u}(\tilde{u}-\hat u)]\\\tilde{u}(\hat\theta-\tilde{\theta})- \theta(\tilde{u}-\hat u)
	\end{bmatrix}(\tau)}_{\mathbb{L}^p}  d\tau    \cr
	&&+  \int_0^t \norm{e^{-(t-\tau)\A} \begin{bmatrix}
	\mathbb P[h(\hat{\theta}-\tilde{\theta}) \\ 0
	\end{bmatrix}(\tau)}_{\mathbb{L}^p} d\tau\cr
&\leq&  e^{-\gamma_{p,p}t}\norm{Z_0-\tilde{Z}_0}_{\mathbb{L}^p} + \int_0^t [h_d(t-\tau)]^{\frac{\delta+1}{d}}e^{-\beta_2(t-\tau)}\norm{\begin{bmatrix}
	\tilde{u}(\tilde{u}-\hat u) + \hat{u}(\tilde{u}-\hat{u})\\\tilde{u}(\hat \theta-\tilde{\theta})- \hat\theta(\tilde{u}-\hat u)
	\end{bmatrix}(\tau)}_{\mathbb{L}^{\frac{dp}{d+\delta p}}}d\tau\cr
	&&   + \int_0^t 			[h_d(t-\tau)]^{\frac{\delta}{d}}e^{-\beta_1(t-\tau)}\norm{\begin{bmatrix}
		h(\hat\theta-\tilde{\theta}) \\ 0
	\end{bmatrix}(\tau)}_{\mathbb{L}^{\frac{dp}{d+\delta p}}}d\tau\cr
&\leq&  e^{-\gamma_{p,p}t}\norm{Z_0-\tilde{Z}_0}_{\mathbb{L}^p} + \int_0^t [h_d(t-\tau)]^{\frac{\delta+1}{d}}e^{-\beta_2(t-\tau)}\norm{\hat{Z}-\tilde{Z}}_{\mathbb{L}^p}\left( \norm{\hat{Z}(\tau)}_{\mathbb{L}^{\frac{d}{\delta}}} +\norm{\tilde{Z}(\tau)}_{\mathbb{L}^{\frac{d}{\delta}}}\right) d\tau\cr
&&+ \int_0^t [h_d(t-\tau)]^{\frac{\delta}{d}}e^{-\beta_1(t-\tau)}\norm{h(\tau)}_{L^{\frac{d}{\delta}}}\norm{\hat{\theta}(\tau)-\tilde{\theta}(\tau)}_{L^p} d\tau\cr
&\leq&   e^{-\gamma_{p,p}t}\norm{Z_0 - \tilde{Z}_0}_{\mathbb{L}^p}  + \left( \norm{\hat{Z}}_{\mathcal X} +\norm{\tilde{Z}}_{\mathcal X}\right)\cr
&&\hspace{3.7cm}\times \int_0^t [h_d(t-\tau)]^{\frac{\delta+1}{d}}e^{-\beta_2(t-\tau)}[h_d(\tau)]^{\frac{1-\delta}{d}}e^{-\alpha \tau} \norm{\hat{Z}(\tau)-\tilde{Z}(\tau)}_{\mathbb{L}^p}   d\tau \cr
&&+ \norm{h}_{\infty,\frac{d}{\delta}}\int_0^t [h_d(t-\tau)]^{\frac{\delta}{d}}e^{-\beta_1(t-\tau)}\norm{\hat{Z}(\tau)-\tilde{Z}(\tau)}_{\mathbb{L}^p} d\tau,
\end{eqnarray}
where $\beta_1= \gamma_{dp/(1+\delta p),p},\; \beta_2=\dfrac{\gamma_{p,p}+\gamma_{dp/(1+\delta p),p}}{2}$.
 
Setting $y(\tau)=e^{\bf\Theta \tau} \norm{\hat{Z}(\tau)-\tilde{Z}(\tau)}_{\mathbb{L}^p} $ for ${\bf \Theta}<\min\{\gamma_{p,p},\beta_1 \}$. From \eqref{Sta1} and the fact that $\norm{\hat{Z}}_{\mathcal{X}}+\norm{\tilde{Z}}_{\mathcal{X}}\leq \rho+\rho=2\rho$, we obtain
\begin{eqnarray}\label{ine1}
y(t) &\leq& e^{-(\gamma_{p,p}-{\bf\Theta})t}\norm{Z_0 -\tilde{Z}_0}_{\mathbb{L}^p}  \cr
&&+ 2\rho\int_0^t [h_d(t-\tau)]^{\frac{\delta+1}{d}}e^{-(\beta_2-{\bf\Theta})(t-\tau)}[h_d(\tau)]^{\frac{1-\delta}{d}}e^{-\alpha \tau} y(\tau)   d\tau   
\cr
&&+ \norm{h}_{\infty,\frac{d}{\delta}}\int_0^t [h_d(t-\tau)]^{\frac{\delta}{d}}e^{-(\beta_1-{\bf\Theta})(t-\tau)}y(\tau) d\tau,
\end{eqnarray}
Since we have the following convergence of improper integrals (see Appendix \ref{app} for more details):
$$\int_0^t [h_d(t-\tau)]^{\frac{\delta+1}{d}}e^{-(\beta_2-{\bf\Theta})(t-\tau)}[h_d(\tau)]^{\frac{1-\delta}{d}}e^{-\alpha \tau}   d\tau\leq P<+\infty,$$
$$\int_0^t [h_d(t-\tau)]^{\frac{\delta}{d}}e^{-(\beta_1-{\bf\Theta})(t-\tau)} d\tau
= C^{\frac{\delta}{d}}\left[(\beta_1-{\bf\Theta})^{\frac{\delta}{2}-1}\Gamma\left( 1-\frac{\delta}{2}\right)  +\dfrac{1}{(\beta_1-{\bf\Theta})} \right]
\leq \tilde{P}<+\infty,$$
we can use Gronwall's inequality to get 
	$$y(t)\leq \norm{Z_0-\tilde{Z}_0}_{\mathbb{L}^p} e^{2\rho P+\tilde{P}}\; \text{ for all }t>0.$$
This leads to  
\begin{equation}\label{INE1}
\norm{\hat{Z}(t)-\tilde{Z}(t)}_{\mathbb{L}^p} \leqslant C e^{-{\bf\Theta} t} \norm{Z_0 - \tilde{Z}_0}_{\mathbb{L}^p} \text{ for all } t>0.
\end{equation}
By the same manner as above, we can also establish that 
\begin{equation}\label{INE2}
\norm{\hat{Z}(t)-\tilde{Z}(t)}_{\mathbb{L}^d} \leqslant C e^{-{\bf\Theta} t} \norm{Z_0 - \tilde{Z}_0}_{\mathbb{L}^d} \text{ for all } t>0.
\end{equation}

No, we estimate 
$z(\tau)=e^{{\bf\Theta} \tau}[h_d(\tau)]^{-\frac{1-\delta}{d}}e^{\alpha \tau}\norm{\hat{Z}(\tau)-\tilde{Z}(\tau)}_{\mathbb{L}^{\frac{d}{\delta}}}$. From the fact that $[h_d(t)]^{-\frac{1-\delta}{d}}\le C^{\frac{\delta-1}{d}}$ for all $t>0$, we have
\begin{eqnarray}\label{exp-stab}
	z(t)
&\leq& e^{{\bf\Theta} t}[h_d(t)]^{-\frac{1-\delta}{d}}e^{\alpha t} \norm{e^{-t\A}(Z_0-\tilde{Z}_0)}_{\mathbb{L}^{\frac{d}{\delta}}}\cr
&&+ e^{{\bf\Theta} t}[h_d(t)]^{-\frac{1-\delta}{d}}e^{\alpha t}  \int_0^t\norm{ e^{-(t-\tau)\A}\dive \begin{bmatrix}
		\mathbb P[\tilde{u}(\tilde{u}-\hat{u}) + \hat{u}(\tilde{u}-\hat{u})]\\\tilde{u}(\hat{\theta}-\tilde{\theta})- \hat{\theta}(\tilde{u}-\hat{u})
	\end{bmatrix}(\tau)}_{\mathbb{L}^{\frac{d}{\delta}}}d\tau  \cr
&& + e^{{\bf\Theta} t}[h_d(t)]^{-\frac{1-\delta}{d}}e^{\alpha t} \int_0^t \norm{e^{-(t-\tau)\A} \begin{bmatrix}
		\mathbb P[h(\hat{\theta}-\tilde{\theta}) \\ 0
\end{bmatrix}(\tau)}_{\mathbb{L}^{\frac{d}{\delta}}} d\tau \cr
&\leq& e^{({\bf\Theta}+\alpha-\gamma_{d,d/\delta}) t} \norm{ Z_0 - \tilde{Z}_0}_{\mathbb{L}^d}\cr
&&+ [h_d(t)]^{-\frac{1-\delta}{d}}e^{(\bf\Theta+\alpha) t}   \int_0^t [h_d(t-\tau)]^{\frac{\delta+1}{d}}e^{-\tilde{\beta}_2(t-\tau)}\norm{\begin{bmatrix}
			 \tilde{u}(\tilde{u}-\hat u) + \hat u(\tilde{u}-\hat u) \\\tilde{u}(\hat \theta-\tilde{\theta})- \hat \theta(\tilde{u}-\hat u)
		\end{bmatrix}(\tau)}_{\mathbb{L}^{\frac{d}{2\delta}}}d\tau  \cr
&&+ [h_d(t)]^{-\frac{1-\delta}{d}}e^{(\bf\Theta+\alpha) t} \int_0^t[h_d(t-\tau)]^{\frac{\delta}{d}}e^{-\tilde{\beta}_1(t-\tau)} \norm{ \begin{bmatrix}
		 h(\hat \theta-\tilde{\theta}) \\ 0
	\end{bmatrix}(\tau)}_{\mathbb{L}^{\frac{d}{2\delta}}} d\tau \cr
&\leq& e^{(\Theta+\alpha-\gamma_{d,d/\delta}) t} \norm{Z_0 - \tilde{Z}_0}_{\mathbb{L}^d}
+ [h_d(t)]^{-\frac{1-\delta}{d}}e^{(\bf\Theta+\alpha) t}   \cr&& \times\int_0^t [h_d(t-\tau)]^{\frac{\delta+1}{d}}e^{-\tilde{\beta}_2(t-\tau)}\left(\norm{\hat{Z}(\tau)}_{\mathbb{L}^{\frac{d}{\delta}}}+\norm{\tilde{Z}(\tau)}_{\mathbb{L}^{\frac{d}{\delta}}}\right)    \norm{(\hat{Z}-\tilde{Z})(\tau)}_{\mathbb{L}^{\frac{d}{\delta}}}d\tau \cr
&&+  [h_d(t)]^{-\frac{1-\delta}{d}}e^{(\bf\Theta+\alpha) t} \int_0^t[h_d(t-\tau)]^{\frac{\delta}{d}}e^{-\tilde{\beta}_1(t-\tau)} \norm{h(\tau)}_{L^{\frac{d}{\delta}}}\norm{\hat\theta(\tau)-\tilde{\theta}(\tau)}_{L^{\frac{d}{\delta}}} d\tau \cr
&\leq& e^{(\Theta+\alpha-\gamma_{d,d/\delta}) t} \norm{Z_0-\tilde{Z}_0}_{\mathbb{L}^d}	+ 2\rho C^{\frac{\delta-1}{d}} \int_0^t [h_d(t-\tau)]^{\frac{\delta+1}{d}}[h_d(\tau)]^{\frac{2(1-\delta)}{d}}e^{-({\tilde{\beta}_2}-\bf\Theta-\alpha)(t-\tau)} e^{-\alpha\tau}y(\tau)d\tau \cr
&&+  C^{\frac{\delta-1}{d}}\norm{h}_{\infty,L^{\frac{d}{\delta}}} \int_0^t[h_d(t-\tau)]^{\frac{\delta}{d}}e^{-(\tilde{\beta}_1-\bf\Theta-\alpha)(t-\tau)} y(\tau) d\tau,  
\end{eqnarray}
where $\tilde{\beta}_1= \gamma_{d/2\delta,d/\delta}$, $\tilde{\beta}_2= \frac{\gamma_{d/\delta,d/\delta} + \gamma_{d/2\delta,d/\delta}}{2}$.
Therefore, by using the fact that (see Appendix \ref{app}):
$$ \int_0^t [h_d(t-\tau)]^{\frac{\delta+1}{d}}[h_d(\tau)]^{\frac{2(1-\delta)}{d}}e^{-(\tilde{\beta}_2 -\bf\Theta-\alpha)(t-\tau)} e^{-\alpha\tau}d\tau\leq Q<+\infty,$$  
$$\int_0^t [h_d(t-\tau)]^{\frac{\delta+1}{d}}[h_d(\tau)]^{\frac{1-\delta}{d}}e^{-(\tilde{\beta}_1-\bf\Theta-\alpha)(t-\tau)} d\tau\leq \tilde{Q}<+\infty$$
and Gronwall's inequality, we can obtain from \eqref{exp-stab} that
$$z(t)\leq \norm{Z_0-\tilde{Z}_0}_{\mathbb{L}^d} e^{2\rho Q+\tilde{Q}}\; \text{ for all }t>0.$$
This is equivalent to
\begin{equation}\label{INE3}
[h_d(t)]^{-\frac{1-\delta}{d}}e^{\alpha t}\norm{\hat{Z}(t)-\tilde{Z}(t)}_{\frac{d}{\delta}} \leqslant C e^{-{\bf\Theta} t} \norm{Z_0 - \tilde{Z}_0}_{\mathbb{L}^d} \text{ for all } t>0.
\end{equation}
The exponential stability \eqref{expstab} holds by \eqref{INE1}, \eqref{INE2} and \eqref{INE3}. The proof is complete.
\end{proof}

\subsection{Application to the existence of periodic solutions}
In this part, we give an application of the stability obtained in Theorem \ref{stability} by establishing the existence of periodic mild solutions for Boussinesq system \eqref{BouMatrixEq}. The method is extended from \cite{HuyHa2022,HuyVan2022} and is called Serrin principle (see \cite{Se1959}).
\begin{theorem}\label{pest}
Assume that the conditions of Theorem \ref{PeriodicThm} hold.  
Then, if the functions $F$, $f$ and $h$ are periodic respecting on the time with periodicity $T$ (i.e., $T$-periodic functions), then equation \eqref{BouMatrixEq} has a  $T$-periodic solution $\hat{Z}=(\hat{u},\hat\theta)$ with the same periodicity $T$ in a small ball of $\mathcal X$. Moreover, the $T$-periodic solution to \eqref{BouMatrixEq} is locally unique in the sense that: Two $T$-periodic mild solutions $Z_1=(u,\theta)$ and $Z_2=(v,\xi)$ starting sufficiently near each other (i.e., $\|Z_1(0)-Z_2(0)\|_{\mathbb{L}^p\cap \mathbb{L}^d}$ is sufficiently small) are identical.  
\end{theorem} 
\begin{proof}
\def\xcal{\mathcal X}
\def\T{e^{-\A t}}
For each sufficiently small initial data $W=(x,y)\in \mathbb{L}^p({\bf M})\cap \mathbb{L}^d({\bf M})$, Theorem \ref{PeriodicThm}  follows that there exists a unique bounded mild solution $Z=(v,\xi)\in \mathcal X$ to equation
\eqref{BouMatrixEq} with $Z(0)=W$ in a small ball $\B_\rho$ of $\mathcal X$, if $\|(F,f)\|_{\mathcal{Y}}$ and $\norm{h}_{\infty,L^{\frac{d}{\delta}}}$ are small enough. More precisely, the facts that 
$$\rho<\min\left\{\frac{1}{2{K}},\frac{7}{16\widehat{K}}\right\},\; \norm{W}_{\mathbb{L}^p\cap \mathbb{L}^d} < \frac{\rho}{4C},\;\norm{h}_{\infty,L^{\frac{d}{\delta}}}<\frac{1}{8N}, \; \norm{(F,f)}_{\mathcal{Y}} < \frac{\rho}{8M}$$
guarantee the existence and uniqueness of such $Z$. 

Moreover, we can take an even smaller initial vector field $Z_0 = (u_0,\theta_0)$ such that 
$$\rho<\min\left\{\frac{1}{8K},\dfrac{7}{48\widehat{K}}\right\},\;\|Z_0\|_{\mathbb{L}^p\cap \mathbb{L}^d}\le \dfrac{\rho}{16C},\,\norm{h}_{\infty,L^{\frac{d}{\delta}}}<\dfrac{1}{32N}, \; \norm{(F,f)}_{\mathcal{Y}} < \dfrac{\rho}{32M}$$ 
(actually, $Z_0$ may be taken to be $(0,\,0)$). This leads to the fact that $Z=(u,\theta)$  $\in \B_{\frac{\rho}{4}}$, where $Z=(u,\theta)$ is the unique bounded mild solution to equation \eqref{BouMatrixEq}. That means $\norm{Z(t)}^\blacklozenge\le \dfrac{\rho}{4}$ for all $t>0$.

\def\n{\mathbb{N}} 
We now need to point out that the sequence  $\{Z(nT)=({u}(nT),\theta(nT))\}_{n\in \n}$   
is a Cauchy sequence in the space $\mathbb{L}^p({\bf M})\cap \mathbb{L}^d({\bf M})\cap \mathbb{L}^{\frac{d}{\delta}}({\bf M}) $ with the norm 
$$ \norm{Z(t)}^\blacklozenge= \norm{Z(t)}_{\mathbb{L}^{p}} + \norm{Z(t)}_{\mathbb{L}^d} + [h_d(t)]^{-\frac{1-\delta}{d}}e^{\alpha t} \norm{Z(t)}_{\mathbb{L}^{\frac{d}{\delta}}}.$$
Indeed, for arbitrary fixed natural  numbers  $m>n \in \n$, by putting $(z_1(t),z_2(t))=({u}(t+(m-n)T),\theta(t+(m-n)T)$,  
and using the periodicity of $F,f$ and $h$, we are easy to see that $z=(z_1,z_2)$ is also a mild solution to equation\eqref{BouMatrixEq}. Of course,  $z=(z_1,z_2)\in \B_{\rho/4}$. Therefore, Theorem \ref{stability}  implies that
\begin{equation}
\norm{Z(t)-z(t)}^\blacklozenge \leq C \norm{Z_0 - z(0)}_{\mathbb{L}^p\cap \mathbb{L}^d}  e^{-\bf\Theta t} \leq  K_0 e^{-\bf\Theta t},
\end{equation}
for all $t \geqslant 0$, where the constant $K_0$ independent of $m, n$.

Thence, by  $t:=nT$ in the above inequality and noting that $z(t)=(u(t+(m-n)T),\theta(t+(m-n)T)$, we imply that
\begin{equation}
\norm{Z(nT)-Z(mT)}^\blacklozenge \leqslant K_0 e^{-{\bf\Theta} (nT)},
\end{equation}
for all $m>n \in \mathbb{N}$.

This follows that $\{Z(nT)\}_{n\in \n}\subset \mathbb{L}^p(\bf M) \cap \mathbb{L}^d(\bf M) \cap \mathbb{L}^{d/\delta})(\mathbf M)$ is a Cauchy sequence.
Thus, the sequence $\{Z(nT)\}_{n\in \n}$ is convergent in $ \mathbb{L}^p(\bf M) \cap \mathbb{L}^d(\bf M) \cap \mathbb{L}^{d/\delta})(\mathbf M)  $ with 
$$\norm{Z(nT)}^\blacklozenge\le \dfrac{\rho}{4},$$ 
and we then put $Z^*= \lim_{n\to \infty}Z(nT).$
Clearly, we have the boundedness $\norm{Z^*}_{\mathbb{L}^p\cap\mathbb{L}^d}\le \dfrac{\rho}{4}$.

Taking now $Z^*$ as the initial data, by Theorem \ref{PeriodicThm}, we obtain that there exists a unique bounded mild solution $\hat{Z}(\cdot)=(\hat{u},\hat\theta)(\cdot)$ of the equation \eqref{BouMatrixEq} in the ball $\B_\rho$. 
We then prove that the mild solution $\hat{Z}(\cdot)$ is $T$-periodic. 
To do this, for each fixed $n\in\n$ we put $\omega(t)=(v(t),\xi(t))=(u(t+nT),\theta(t+nT))$ for $t\geq 0$. Again, by the periodicity of $F$, $f$ and $h$ we have
that $\omega(\cdot)$ is also a mild solution of equation\eqref{BouMatrixEq} with $\omega(0)=(u(nT),\theta(nT))$.

Since inequality \eqref{expstab} with $\omega$ instead of $Z=(u,\theta)$, we have
\begin{equation}
\norm{\hat{Z}(T)-\omega(T)}^\blacklozenge \lesssim \norm{\hat{Z}(0)-\omega(0)}_{\mathbb{L}^p\cap\mathbb{L}^d}  e^{-{\bf\Theta} T}.
\end{equation}
This means that
\begin{equation}
\norm{\hat{Z}(T)-Z((n+1)T)}^\blacklozenge \lesssim \norm{Z^* - Z(nT)}_{\mathbb{L}^p\cap\mathbb{L}^d}   e^{-{\bf\Theta} T}.
\end{equation}
Taking now $n\to \infty$ and utilizing the fact that
\begin{equation*}  
\lim_{n\to \infty}Z(nT)=Z^*=\hat{Z}(0)\in \mathbb{L}^p(\bf M)\cap \mathbb{L}^d(\bf M)\cap L^{\frac{d}{\delta}}(\bf M),
\end{equation*}
we obtain $\hat{Z}(T) = \hat{Z}(0).$
Consequently,  $\hat{Z}(\cdot)$ is  $T$-periodic. 

\noindent
The uniqueness of the $T$-periodic solution  follows from inequality \eqref{expstab}. 
Namely, if $Z=(u,\theta)$ and $z=(v,\xi)$ are two $T$-periodic mild solutions to equation \eqref{BouMatrixEq} with initial values  $Z_0=(u_0,\theta_0)$ and $z_0=(v_0,\xi_0)$ with $\|Z_0-z_0\|_{\mathbb{L}^p\cap \mathbb{L}^d}$ small enough, then inequality
 \eqref{expstab} implies that 
\begin{equation} 
 \lim_{t\to \infty}\norm{Z(t)-z(t)}^\blacklozenge =0.
 \end{equation}
Due to periodicity and continuity of $Z(\cdot)$ and $z(\cdot)$, this then yields that $Z(t)=z(t)$ for all $t\in \r_+$. Our proof is complete.
\end{proof}

\section{Appendix}\label{app}
\subsection{Generalized gravitational fields and well-posedness in three dimension case revisited}\label{B}
In this subsection, we verify that the gravitational field on the hyperbolic space ${\bf M}=\mathbb{H}^d$ with dimension $d\geqslant 3$ satisfying Assumption \ref{Assum}. First, we recall the formula of gravitational field in the case $d\geqslant 3$ as (in detail, see \cite[Section 2]{Barrow2020}):
\begin{equation}\label{gra}
\tilde{h}(r) = \frac{d\Phi(r)}{d r} = 
-\frac{(d-2)GM\coth^{d-3}r}{\sinh^2 r},
\end{equation}
where $r$ is the radius of a geodesic ball centered at the origin $O=(1,0,0...0)$ in hyperbolic manifold ${\bf M}$, $G$ is the gravitational constant and $\Phi(r) = GM\coth^{d-2}(r)$ is the gravitational potential acting on the test mass $M$. Here, we choose the curvature radius $R=1$ (in \cite{Barrow2020}, if $R\neq 1$, then $r$ is replaced by $\dfrac{r}{R}$ in equation \eqref{gra}). Observe that, for $d\geqslant 3$, we have the following equivalence
\begin{equation}\label{gra1}
\tilde{h}(r) \simeq
\begin{cases}
-GMe^{-2r}, \hbox{  as  } r\to \infty,\cr
-GMr^{-(d-1)}, \hbox{  as } r\to 0.
\end{cases}
\end{equation}
We observe that the gravitational field $\tilde{h}$ given by \eqref{gra} does not depend on time, then Assumption \ref{Assum} reduces to $\tilde{h}\in L^\infty(\Gamma(T{\bf M})) \cap L^{\frac{d}{2},\infty}(\Gamma(T{\bf M}))$. This condition is not valid on the whole space ${\bf M}$, but it can be valid on an exterior domain\footnote{If we consider the Boussinesq system \eqref{BouEq} on the exterior domain $\Omega$ in hyperbolic space ${\bf M}$, we need boundary conditions $u(\cdot,t)|_{\partial\Omega}=0$ and $\theta(\cdot,t)|_{\partial\Omega}=\omega(\cdot,t)$. This condition is similar to the one on an exterior domain in Euclid space (see \cite{Hi1997}).} $\Omega = {\bf M}-\mathbb{B}(O,\varepsilon)$, where  $\mathbb{B}(O,\varepsilon)$ is a geodesic ball in ${\bf M}$ centered at the origin $O= (1,0,0...0)$ with geodesic radius $\varepsilon$. In this context, we have $\tilde{h}\in L^\infty(\Gamma(T\Omega)) \cap L^{\frac{d}{2},\infty}(\Gamma(T\Omega))$. This condition is similar to the one given by Hishida (see conditions (3.1) and (3.2), page 61 in \cite{Hi1997}) when he considered the Boussinesq equation on exterior domain in Euclid space $\mathbb{R}^3$. 

Now, we verify $\tilde{h}\in L^\infty(\Gamma(T\Omega)) \cap L^{\frac{d}{2},\infty}(\Gamma(T\Omega))$, for $h$ given by \eqref{gra}. Clearly, on the exterior domain $\Omega$, the condition $\tilde{h} \in L^{\infty}(\Gamma(T\Omega))$ is valid. Moreover, the norm of interpolation space defined on hyperbolic manifold ${\bf M}$ is given by (see notions in the proof of Corollary 3.3 in \cite{An2009}):
\begin{equation}\label{LorentzNorm}
\norm{f}_{L^{q,\infty}} = \sup_{0<r<1}r^{\frac{d}{q}}|f(r)| + \sup_{r\geqslant 1} e^{\frac{d-1}{q}r}|f(r)|.
\end{equation}
Using equivalence \eqref{gra1}, we can show that the gravitational field $\tilde{h}$ on ${\bf M}$ (with dimension $d\geqslant 3$) satisfies $\norm{\tilde{h}}_{L^{\frac{d}{2},\infty}}<+\infty$, hence $\tilde{h}$ belongs to $L^{\frac{d}{2},\infty}(\Gamma(T\Omega))$.

In order to study the Boussinesq system \eqref{BouEq} on the whole space ${\bf M}$, we extend to consider a generalized gravitational field $h:{\bf M}\times \mathbb{R}_+\to \Gamma(T{\bf M})$ given by $h(x,t)= \alpha(x,t)\tilde{h}(x)$, where $\alpha:{\bf M}\times \mathbb{R}_+\to \mathbb{R}$, is a bounded and continuous function and has support outside the geodesic ball $\mathbb{B}(O,\varepsilon)$. Since $\tilde{h} \in L^\infty(\Gamma(T\Omega)) \cap L^{\frac{d}{2},\infty}(\Gamma(T\Omega))$ and the properties of $\alpha(x,t)$, the generalized gravitational field $h(x,t)=\alpha(x,t)\tilde{h}(x)$ satisfies Assumption \ref{Assum}, i.e., $h\in C_b(\r_+,L^\infty(\Gamma(T{\bf M}))\cap L^{\frac{d}{2},\infty}(\Gamma(T{\bf M})))$.
\begin{remark}
In the three dimension case, i.e., ${\bf M}=\mathbb{H}^3$, we can consider the field $h=\tilde{h}$ and obtain the well-posedness for Boussinesq systems on the whole space $\mathbb{H}^3$ by using the weak-$L^p$ spaces. In particular, from \eqref{gra1} and \eqref{LorentzNorm} we can verify directly that $\tilde{h}\in L^{\frac{3}{2},\infty}(\Gamma(T\mathbb{H}^3))$ which is coincidence to the property of gravitational field on Euclidean space $\mathbb{R}^3$. Therefore, by using $L^p-L^q$-dispersive estimates in Lemma \ref{estimates}, this property and Yamazaki's estimates we can process the same manner as in the previous works \cite{Fe2006,Fe2010,HuyXuan2022,XuanNgoc2025} to obtain the global well-posedness of the mild solutions for Boussinesq systems in the space $C_b(\r_+,L_\sigma^{3,\infty}(\Gamma(T\mathbb{H}^3))\times L^{3,\infty}(\mathbb{H}^3))$. The result reads as follows:
\begin{theorem}\label{PeriodicThm'}(Global well-posedness on $\mathbb{H}^3$).
Let $(\mathbb{H}^3,g)$ be a $3$-dimensional real hyperbolic manifold. 
Assume that the field $h \in L^{\frac{3}{2},\infty}(\Gamma(T\mathbb{H}^3))$ and the external force $(F,f)\in C_b(\r_+,\mathcal{L}^{\frac{3}{2},\infty}(\mathbb{H}^3))$, where $\mathcal{L}^{\frac{3}{2},\infty}(\mathbb{H}^3) = L^{\frac{3}{2},\infty}_\sigma(\mathbb{H}^3;\Gamma(T\mathbb{H}^3\otimes T\mathbb{H}^3))\times L^{\frac{3}{2},\infty}(\mathbb{H}^3;\mathbb{R}\otimes \mathbb{R})$. If the norms $\norm{Z_0}_{\mathbb{L}^{3,\infty}}$, $\norm{h}_{L^{\frac{3}{2},\infty}}$, $\norm{(F,f)}_{\infty, \mathcal{L}^{\frac{3}{2},\infty}}$  are sufficiently small, then equation \eqref{BouMatrixEq} has one and only one bounded mild solution $\hat{Z}(\cdot)=(\hat{u}(\cdot),\hat{\theta}(\cdot))$ in a small ball of $C_b(\r_+,\mathbb{L}^{3,\infty}(\mathbb{H}^3))$, where $\mathbb{L}^{3,\infty}(\mathbb{H}^3)=L_\sigma^{3,\infty}(\Gamma(T\mathbb{H}^3))\times L^{3,\infty}(\mathbb{H}^3)$.
\end{theorem}
\end{remark}

\subsection{Some calculations}\label{A}
In this section, we prove the boundedness of the following improper integrals which were used in the previous sections:
\begin{eqnarray*}
\tilde{N}_1&=&C^{\frac{\delta-1}{d}}\int_0^t [h_d(t-\tau)]^{\frac{\delta}{d}}[h_d(\tau)]^{\frac{1-\delta}{d}}e^{-({\tilde{\beta}_1}-\alpha)(t-\tau)} d\tau<+\infty;\cr
	 \tilde{M}_1&=&C^{\frac{\delta-1}{d}}\int_0^t [h_d(t-\tau)]^{\frac{\delta+1}{d}}[h_d(\tau)]^{\frac{1-\delta}{d}}e^{-(\tilde{\beta}_2-\alpha)(t-\tau)} d\tau<+\infty;\cr
K_1&=&\int_0^t [h_d(t-\tau)]^{\frac{\delta+1}{d}}e^{-\beta_2(t-\tau)}[h_d(\tau)]^{\frac{1-\delta}{d}}e^{-\alpha \tau}d\tau<+\infty;\cr
	\hat{K}_1&=&\int_0^t [h_d(t-\tau)]^{\frac{1+\delta}{d}}e^{-\hat{\beta}_2(t-\tau)}[h_d(\tau)]^{\frac{1-\delta}{d}}e^{-\alpha \tau}d\tau<+\infty;\cr 
\tilde{K}_1&=&C^{\frac{\delta-1}{d}}\int_0^t [h_d(t-\tau)]^{\frac{\delta+1}{d}}[h_d(\tau)]^{\frac{2(1-\delta)}{d}}e^{-(\tilde{\beta}_2-\alpha)(t-\tau)} e^{-\alpha\tau}d\tau<+\infty,	
\end{eqnarray*}
and
\begin{eqnarray*}
&&\int_0^t [h_d(t-\tau)]^{\frac{\delta+1}{d}}e^{-(\beta_2-\Theta)(t-\tau)}[h_d(\tau)]^{\frac{1-\delta}{d}}e^{-\alpha \tau}   d\tau <+\infty;\cr
&&[h_d(t)]^{-\frac{1-\delta}{d}}  \int_0^t [h_d(t-\tau)]^{\frac{\delta+1}{d}}[h_d(\tau)]^{\frac{2(1-\delta)}{d}}e^{-(\tilde{\beta}_2 -\bf\Theta-\alpha)(t-\tau)} e^{-\alpha\tau}d\tau <+\infty; \cr
&&[h_d(t)]^{-\frac{1-\delta}{d}}\norm{h}_{\infty,\frac{d}{\delta}}\int_0^t [h_d(t-\tau)]^{\frac{\delta+1}{d}}[h_d(\tau)]^{\frac{1-\delta}{d}}e^{-(\tilde{\beta}_1-\bf\Theta-\alpha)(t-\tau)} d\tau <+\infty,
 \end{eqnarray*}
where
$$\beta_1= \gamma_{dp/(1+\delta p),p},\;\beta_2= \dfrac{\gamma_{p,p} + \gamma_{dp/(1+\delta p),p}}{2},\;\hat{\beta}_1= \gamma_{d/(1+\delta),d},$$
$$\hat{\beta}_2= \dfrac{\gamma_{d,d}+\gamma_{d/(1+\delta),d}}{2},\;\tilde{\beta}_1= \gamma_{d/2\delta,d/\delta},\;\tilde{\beta}_2= \dfrac{\gamma_{d/\delta,d/\delta} + \gamma_{d/2\delta,d/\delta}}{2}.$$ 

The boundedness of the integrals $\tilde{N}_1,\,\tilde{M}_1,\, K_1,\,\hat{K}_1,\, P$ and $\tilde{Q}$ are proved similarly. We prove only the boundedness of $\tilde{N}_1$. Indeed, we consider the following cases of $t$:

{\bf For the case: $0<t<1$.} It is clear that
\begin{eqnarray*}
	\tilde{N}_1 &=& C^{\frac{\delta-1}{d}}\int_0^t [h_d(t-\tau)]^{\frac{1+\delta}{d}} [h_d(\tau)]^{\frac{1-\delta}{d}} e^{-(\tilde{\beta}_1-\alpha) (t-\tau)} d\tau\cr
	&\leq& C^{\frac{\delta-1}{d}} \int_0^t (t-\tau)^{-\frac{1+\delta}{2}} \tau^{-\frac{1-\delta}{2}} d\tau\cr
	&\leq&C^{\frac{\delta-1}{d}} {\bf B}\left( \frac{1-\delta}{2},\,\frac{1+\delta}{2} \right)<+\infty,
\end{eqnarray*}
where ${\bf B}(\cdot,\, \cdot)$ is the beta function.

{\bf For the case: $1\leq t$}. We imply that
\begin{eqnarray*}
	\tilde{N}_1 &=& C^{\frac{\delta-1}{d}}\int_0^t [h_d(t-\tau)]^{\frac{1+\delta}{d}} [h_d(\tau)]^{\frac{1-\delta}{d}} e^{-(\tilde{\beta}_1-\alpha) (t-\tau)} d\tau\cr
	&=& C^{\frac{\delta-1}{d}} \int_0^1 [h_d(t-\tau)]^{\frac{1+\delta}{d}} [h_d(\tau)]^{\frac{1-\delta}{d}} e^{-(\tilde{\beta}_1-\alpha) (t-\tau)} d\tau \cr
	&&+C^{\frac{\delta-1}{d}} \int_1^t [h_d(t-\tau)]^{\frac{1+\delta}{d}} [h_d(\tau)]^{\frac{1-\delta}{d}} e^{-(\tilde{\beta}_1-\alpha) (t-\tau)} d\tau\cr
	&\leq & C^{\frac{\delta-1}{d}} \int_0^1 \left((t-\tau)^{-\frac{1+\delta}{2}}+1\right) \tau^{-\frac{1-\delta}{2}} e^{-(\tilde{\beta}_1 - \alpha) (t-\tau)} d\tau \cr
	&&+C^{\frac{\delta-1}{d}} \int_1^t (t-\tau)^{-\frac{1+\delta}{2}} e^{-(\tilde{\beta}_1-\alpha) (t-\tau)} d\tau\cr
	&\leq& C^{\frac{\delta-1}{d}} \int_0^t (t-\tau)^{-\frac{1+\delta}{2}}\tau^{-\frac{1-\delta}{2}} d\tau + C^{\frac{\delta-1}{d}}\int_0^1 \tau^{-\frac{1-\delta}{2}} d\tau\cr
	&&+C^{\frac{\delta-1}{d}} \int_0^t (t-\tau)^{-\frac{1+\delta}{2}} e^{-(\tilde{\beta}_1-\alpha) (t-\tau)} d\tau\cr
	&\leq &C^{\frac{\delta-1}{d}} \left[ {\bf B}\left( \frac{1-\delta}{2},\, \frac{1+\delta}{2} \right) + \frac{2}{1+\delta} + (\tilde{\beta}_1-\alpha)^{-\frac{1-\delta}{2}}\mathbf{\Gamma}\left(\frac{1-\delta}{2}\right)\right] <+\infty,
\end{eqnarray*}
where $\mathbf{\Gamma}(\cdot)$ is the gamma function.

The boundedness of integral $\tilde{K}_1$ and $Q$ are pointed out similarly, we prove only for $\tilde{K}_1$ as follows.

{\bf For the case: $0<t<1$}. We have
\begin{eqnarray*}
\tilde{K}_1 &=& C^{\frac{\delta-1}{d}} \int_0^t [h_d(t-\tau)]^{\frac{1+\delta}{d}} [h_d(\tau)]^{\frac{2(1-\delta)}{d}} e^{-(\tilde{\beta}_2-\alpha) (t-\tau)} e^{-\alpha \tau} d\tau\cr
	&\leqslant& C^{\frac{\delta-1}{d}}  \int_0^t (t-\tau)^{-\frac{1+\delta}{2}} \tau^{-(1-\delta)} d\tau\cr
	&\leqslant&C^{\frac{\delta-1}{d}}  t^{\frac{1-\delta}{2}}{\bf B}\left( \frac{1-\delta}{2},\,\delta \right)<+\infty,\; \hbox{ because }  0<t<1.
\end{eqnarray*}

{\bf For the case: $1\leq t$}. It is not hard to get following estimates.
\begin{eqnarray*}
	\tilde{K}_1 &=&C^{\frac{\delta-1}{d}}  \int_0^t [h_d(t-\tau)]^{\frac{1+\delta}{d}} [h_d(\tau)]^{\frac{2(1-\delta)}{d}} e^{-(\tilde{\beta}_2-\alpha) (t-\tau)} e^{-\alpha \tau} d\tau\cr
	&=&C^{\frac{\delta-1}{d}}   \int_0^1 [h_d(t-\tau)]^{\frac{1+\delta}{d}} [h_d(\tau)]^{\frac{2(1-\delta)}{d}} e^{-(\tilde{\beta}_2-\alpha) (t-\tau)} d\tau \cr
	&&+C^{\frac{\delta-1}{d}}  \int_1^t [h_d(t-\tau)]^{\frac{1+\delta}{d}} [h_d(\tau)]^{\frac{2(1-\delta)}{d}} e^{-(\tilde{\beta}_2-\alpha) (t-\tau)} d\tau\cr
	&\leqslant&C^{\frac{\delta-1}{d}}   \int_0^1 \left((t-\tau)^{-\frac{1+\delta}{2}}+1\right) \tau^{-(1-\delta)}e^{-(\tilde{\beta}_2-\alpha)\tau} d\tau \cr
	&&+C^{\frac{\delta-1}{d}}  \int_1^t (t-\tau)^{-\frac{1+\delta}{2}} e^{-(\tilde{\beta}_2-\alpha) (t-\tau)} d\tau\cr
	&\leqslant&C^{\frac{\delta-1}{d}} \left[   \left(2^{\frac{1+\delta}{2}}+1\right) \int_0^{1/2} \tau^{-(1-\delta)} d\tau + 2^{1-\delta}\int_{1/2}^1 \left((t-\tau)^{-\frac{1+\delta}{2}}+1\right) e^{-(\tilde{\beta}_2-\alpha)\tau}d\tau\right] \cr
	&&+C^{\frac{\delta-1}{d}}  \int_0^t (t-\tau)^{-\frac{1+\delta}{2}} e^{-(\tilde{\beta}_2-\alpha) (t-\tau)} d\tau\cr
	&\leqslant&C^{\frac{\delta-1}{d}} \left[  \left(2^{\frac{1+\delta}{2}}+1\right)\frac{1}{\delta 2^\delta} + \frac{2^{1-\delta}}{\tilde{\beta}_2-\alpha}\left( e^{-\frac{\tilde{\beta}_2-\alpha}{2}}- e^{-(\tilde{\beta}_2-\alpha)}\right)\right]  \cr
	&&+C^{\frac{\delta-1}{d}}  (2^{1-\delta}+1) (\tilde{\beta}_2-\alpha)^{-\frac{1-\delta}{2}}\mathbf{\Gamma}\left(\frac{1-\delta}{2}\right)<+\infty.
\end{eqnarray*}

\end{document}